\newtheorem{thm}{Theorem}[section]
\newtheorem{Lemma}[thm]{Lemma}
\newtheorem{Proposition}[thm]{Proposition}
\newtheorem*{thm*}{Theorem}
\theoremstyle{definition}
\newtheorem{Definition}[thm]{Definition}
\newtheorem{Remark}[thm]{Remark}
\newtheorem{Alg}[thm]{Algorithm}
\definecolor{wwwwww}{rgb}{0.4,0.4,0.4}
\DeclareMathOperator{\codim}{codim}
\DeclareMathOperator{\rk}{rk}
\DeclareMathOperator{\Hilb}{Hilb}
\DeclareMathOperator{\Sym}{Sym}
\DeclareMathOperator{\Sec}{\mathbb{S}ec}
\DeclareMathOperator{\VSP}{VSP}
\begin{document}

\title{Decomposition algorithms for tensors and polynomials}

\author[Antonio Laface]{Antonio Laface}
\address{\sc Antonio Laface\\
Departamento de Matematica, Universidad de Concepci\'on\\
Casilla 160-C, Concepci\'on\\
Chile}
\email{alaface@udec.cl}

\author[Alex Massarenti]{Alex Massarenti}
\address{\sc Alex Massarenti\\ Dipartimento di Matematica e Informatica, Universit\`a di Ferrara, Via Machiavelli 30, 44121 Ferrara, Italy}
\email{alex.massarenti@unife.it}

\author[Rick Rischter]{Rick Rischter}
\address{\sc Rick Rischter\\
Universidade Federal de Itajub\'a (UNIFEI)\\ 
Av. BPS 1303, Bairro Pinheirinho\\ 
37500-903, Itajub\'a, Minas Gerais\\ 
Brazil}
\email{rischter@unifei.edu.br}

\date{\today}
\subjclass[2010]{Primary 14N07; Secondary 14N05, 51N35, 14Q15, 14N15}
\keywords{Tensor decomposition, Identifiability, Segre-Veronese, Secant varieties}

\begin{abstract}
We give algorithms to compute decompositions of a given polynomial, or more generally mixed tensor, as sum of rank one tensors, and to establish whether such a decomposition is unique. In particular, we present methods to compute the decomposition of a general plane quintic in seven powers, and of a general space cubic in five powers; the two decompositions of a general plane sextic of rank nine, and the five decompositions of a general plane septic. Furthermore, we give Magma implementations of all our algorithms.   
\end{abstract}

\maketitle
\setcounter{tocdepth}{1}
\tableofcontents

\section{Introduction}
Let $T$ be a tensor in a given tensor space over a field $K$, and consider additive decompositions of the form  
\stepcounter{thm}
\begin{equation}\label{eq1gen}
T = \lambda_1 U_1+...+\lambda_h U_{h}
\end{equation}
where the $U_i$'s are linearly independent rank one tensors, and $\lambda_i\in K^*$. The \textit{rank} of $T$, denoted by  $\rk(T)$, is the minimal positive integer $h$ such that $T$ admits a decomposition as in (\ref{eq1gen}). 

Tensor decomposition problems and techniques are of relevance in both pure and applied mathematics. For instance, tensor decomposition algorithms have applications in psycho-metrics, chemometrics, signal processing, numerical linear algebra, computer vision, numerical analysis, neuroscience and graph analysis \cite{BK09}, \cite{CM96}, \cite{CGLM08}, \cite{LO15}, \cite{MR13}.

We say that a tensor rank-1 decomposition has the \textit{generic identifiability property} if the expression (\ref{eq1gen}) is unique, up to permutations and scaling of the factors, on a dense open subset of the set of tensors admitting such an expression. Given a tensor rank-1 decomposition of length $h$ as in (\ref{eq1gen}) the problem of \textit{specific identifiability} consists in proving that such a decomposition is unique. Following \cite{COV17} we call an algorithm for specific identifiability \textit{effective} if it is sufficient to prove identifiability on a dense open subset of the set of tensors admitting a decomposition as in (\ref{eq1gen}). Therefore, an algorithm is effective if its constraints are satisfied generically, in other words if the same algorithm proves generic identifiability as well.

Our aim is to give efficient algorithms to explicitly compute a decomposition as in (\ref{eq1gen}) and to establish whether it is unique. The literature on this subjects is quite vast \cite{CM96}, \cite{BCMT10}, \cite{BB12}, \cite{OO13}, \cite{COV17}, \cite{Ba19}, \cite{AC20}, \cite{BT20}, \cite{MO20}. The majority of the available algorithms are based on the notion of eigenvector of a tensor and others deal with tensors of small rank. In this paper we propose a different approach that can be seen as a generalization of the classical catalecticant method.  

In Section \ref{Cat} we revise classical methods based on catalecticants and more generally on flattenings. In Section \ref{GCM} we introduce our main method for symmetric tensors which we then generalize to the case of mixed tensors in Section \ref{Mix}. We explain our main idea in the case of symmetric tensors. Let $F\in K[x_0,\dots,x_n]_d$ be a symmetric tensor, that is a homogeneous polynomial, and let $H_{\partial F}^s$ be the linear subspace of $\mathbb{P}(K[x_0,\dots,x_n]_{d-s})$ spanned by the partial derivatives of order $s$ of $F$. The catalecticant method basically consists in intersecting $H_{\partial F}^s$ with the Veronese variety $\mathcal{V}_{d-s}^n$ parametrizing powers of linear forms. Indeed, if $F$ admits a decomposition as sum of powers  
\stepcounter{thm}
\begin{equation}\label{eq2gen}
F = \lambda_1 L_1^d+...+\lambda_h L_{h}^d
\end{equation}
with $L_i\in K[x_0,\dots,x_n]_1$ then all its partial derivatives can be decomposed using the same linear forms. When $H_{\partial F}^s$ fills the span $\left\langle L_1^{d-s},\dots,L_{h}^{d-s}\right\rangle$ the linear forms $L_1,\dots,L_h$ can be recovered from the intersection $H_{\partial F}^s\cap \mathcal{V}_{d-s}^n$.

The main novelty in our method is that instead of intersecting with $\mathcal{V}_{d-s}^n$ we consider the intersection $H_{\partial F}^s\cap\Sec_{h-N_s}(\mathcal{V}_{d-s}^n)$, where $N_s = \binom{n+s}{s}-1$, and $\Sec_{h-N_s}(\mathcal{V}_{d-s}^n)$ is the $(h-N_s)$-secant variety of $\mathcal{V}_{d-s}^n$. Indeed, we prove that the decomposition of $F$ in (\ref{eq2gen}) can be reconstructed from such intersection. For instance, when $s =1$ our method works under the following bound
$$h < B_{n,d} := \frac{\binom{d-1+n}{n}+n^2}{n+1}.$$
The catalecticant method works at its best for even degree $d = 2k$ under the bound $h\leq \binom{n+k}{k}$. Note that this binomial coefficients is in general much smaller than $B_{n,d}$. The main drawback of our approach is that equations for secant varieties of Veronese varieties are known in very few cases \cite{LO13}. However, in the cases we were able to check empirically it turned out that the equations for secant varieties coming from classical flattenings are enough in order to establish whether a decomposition is unique, and in case to explicitly compute it. Furthermore, for degree two Veronese varieties these equations are classically known. So, our technique is very effective for computing decompositions of cubics and more generally ternary tensors. Indeed, out of this method we get an identifiability criterion for cubics for 
\stepcounter{thm}
\begin{equation}\label{bcub}
h < \frac{4n-\sqrt{8n+1}+3}{2}
\end{equation}
which also allows us to explicitly compute the decomposition. Furthermore, we prove that such criterion is effective for $h\leq n+2$. In particular, when $(n,h) = (3,5)$ we prove that the equality in (\ref{bcub}) is allowed, and we get a method to compute the decomposition in the Sylvester's pentahedral theorem \cite{Sy04}.

There are just other two cases in addition to the Sylvester's pentahedral theorem where a general polynomial $F\in \mathbb{C}[x_0,\dots,x_n]_d$ is $h$-identifiable, namely for $n = 1, d = 2m+1,h = m$ and $n=2,d=5,h=7$ \cite[Theorem 1]{GM19}. The first proof of the uniqueness of the decomposition  in seven powers of a general plane quintic is due to D. Hilbert \cite{Hi88}. This interesting case is not among the ones covered by our main method. However, in Section \ref{sec:Hilb} we introduce another technique, bases on linear projections from spaces spanned by derivatives, which turns out to be effective in Hilbert's case. Furthermore, as a variation of this method we give an algorithm to compute the five decompositions of a general plane septic in twelve powers \cite{Di07}. Moreover as a combination of our main method in Section \ref{GCM} and the concept of star configuration we get a faster algorithm that works when the given polynomial can be decomposed using linear forms defined over $\mathbb{Q}$.

In Section \ref{sub_gen} we consider the subgeneric cases. When $h$ is smaller than the generic rank we have that a general polynomial $F\in K[x_0,\dots,x_n]_d$ of rank $h$ is identifiable except when $(n,d,h)\in\{(2,6,9),(3,4,8),(5,3,9)\}$ and in these three cases there are exactly two decompositions \cite[Theorem 1.1]{COV18} which are contained in an elliptic curve. By looking at polynomials of small degree in the ideal of certain projections of these elliptic curves or of the relevant Veronese varieties we manage to give non trivial constraints that the decompositions must satisfy. In particular, when $(n,d,h) = (2,6,9)$ we produce an algorithm that successfully computes the two decompositions of a general plane sextic of rank nine.  

In Section \ref{vsp}, plugging-in the concept of variety of sums of powers, we consider the cases when a homogeneous polynomial admits infinitely many decompositions in $h$ powers. For instance, we successfully apply this method to plane quartics for $h = 6$, and to plane sextics for $h = 10$.

We implemented all our algorithms in Magma \cite{Magma97}. In the following table we list some cases in which our scripts managed to compute the decompositions:
\begin{center}
\begin{tabular}{l|c|c|c}
\textit{n} & \textit{d} & \textit{h} & Algorithms \\ 
\hline 
2 & 5 & $\leq 7$ & \ref{cat_alg}, \ref{Alg_Hilb} \\ 
\hline 
2 & 6 & $\leq 10$ & \ref{cat_alg}, \ref{hyp} and Remark \ref{rem_VSP} \\ 
\hline 
2 & 7 & $\leq 12$ & \ref{cat_alg}, \ref{alg_dix}\\  
\hline 
3 & 3 & $\leq 5$ & \ref{cat_g}\\
\hline 
6 & 3 & $\leq 9$ & \ref{cat_g}
\end{tabular} 
\end{center}
In Section \ref{Mix} we extend our main method in Section \ref{GCM} to mixed tensors. For instance, for Segre products of type $\mathbb{P}^n\times\mathbb{P}^n\times\mathbb{P}^n$ we get an algorithm that works for $h < 2n-\sqrt{n}+1$ while the classical flattening method in Section \ref{Cat} works for $h\leq n+1$. 

For example, Algorithm \ref{alg_S} successfully computed the decomposition of rank $h = 6$ tensors in $K^5\otimes K^{5}\otimes K^5$, and of rank $h = 11$ tensor in $K^{8}\otimes \Sym^2 K^{8}$.

Finally, we would like to stress that, since to establish identifiability it is enough to compute the degree of a $0$-dimensional scheme, our algorithms perform much better when just asked to determine whether a tensor is identifiable. For instance, Algorithms \ref{gen2}, \ref{alg_S} succeeded in establishing identifiability of rank $h = 15$ polynomials of degree $d = 3$ in $n+1 = 10$ variables, tensors of rank $h = 14$ in $K^{9}\otimes K^{9}\otimes K^{9}$, and tensors of rank $h = 14$ in $K^{9}\otimes \Sym^2K^{9}$. 

\subsection*{Organization of the paper} The paper is organized as follows. 
In Section \ref{Cat} we introduce the notation and recall the classical catalecticant method.
In Section \ref{GCM} we develop our main method for computing polynomial decompositions. In Section \ref{sec:Hilb} we introduce techniques, bases on linear projections and star configurations, to deal with plane quintics and septics. In Section \ref{sub_gen}, considering low degree hypersurfaces containing the projections of suitable Veronese varieties and elliptic normal curves, we introduce techniques to compute the decompositions in the subgeneric cases. For instance, we give an algorithm to compute the two decompositions of a general plane sextic of rank nine. In Section \ref{vsp}, we extend our methods to polynomials admitting infinitely many decompositions using the concept of variety of sums of powers. In Section \ref{Mix} we describe the natural generalization of our main method to general tensors. Finally, in Section \ref{mag_s} we explain how our Magma functions work, and we give some examples on how to use them.

\subsection*{Acknowledgments}
We thank Luca Chiantini and Giorgio Ottaviani for helpful comments. The first named author was partially supported by Proyecto FONDECYT Regular N. 1190777. The second named author is a member of the Gruppo Nazionale per le Strutture Algebriche, Geometriche e le loro Applicazioni of the Istituto Nazionale di Alta Matematica "F. Severi" (GNSAGA-INDAM).

\section{Flattenings and the catalecticant method}\label{Cat}
Let $\underline{n}=(n_1,\dots,n_p)$ and $\underline{d} = (d_1,\dots,d_p)$ be two $p$-uples of positive integers.
Set 
$$d=d_1+\dots+d_p,\ n=n_1+\dots+n_p,\ {\rm and}\
N(\underline{n},\underline{d})=\prod_{i=1}^p\binom{n_i+d_i}{n_i}-1.$$ 

Let $V_1,\dots, V_p$ be $K$-vector spaces of dimensions $n_1+1\leq n_2+1\leq \dots \leq n_p+1$, and consider the product
$$
\mathbb{P}^{\underline{n}} = \mathbb{P}(V_1^{*})\times \dots \times \mathbb{P}(V_p^{*}).
$$
The line bundle 
$$
\mathcal{O}_{\mathbb{P}^{\underline{n}} }(d_1,\dots, d_p)=\mathcal{O}_{\mathbb{P}(V_1^{*})}(d_1)\boxtimes\dots\boxtimes \mathcal{O}_{\mathbb{P}(V_p^{*})}(d_p)
$$
induces an embedding
$$
\begin{array}{cccc}
\sigma\nu_{\underline{d}}^{\underline{n}}:
&\mathbb{P}(V_1^{*})\times \dots \times \mathbb{P}(V_p^{*})& \longrightarrow &
\mathbb{P}(\Sym^{d_1}V_1^{*}\otimes\dots\otimes \Sym^{d_p}V_p^{*})
=\mathbb{P}^{N(\underline{n},\underline{d})-1},\\
      & (\left[v_1\right],\dots,\left[v_p\right]) & \longmapsto & [v_1^{d_1}\otimes\dots\otimes v_p^{d_p}]
\end{array}
$$ 
where $v_i\in V_i$.
We call the image 
$$
\mathcal{SV}_{\underline{d}}^{\underline{n}}= \sigma\nu_{\underline{d}}^{\underline{n}}(\mathbb{P}^{\underline{n}} ) \subset \mathbb{P}^{N(\underline{n},\underline{d})-1}
$$ 
a \textit{Segre-Veronese variety}. When $p = 1$, $\mathcal{V}_{d}^{n}:=\mathcal{SV}_{d}^{n}$ is a Veronese variety. In this case we write $\mathcal{V}_d^n$ for $\mathcal{SV}_{d}^{n}$, and $\nu_{d}^{n}$ for the Veronese embedding.
When $d_1 = \dots = d_p = 1$, $\mathcal{S}^{\underline{n}}:=\mathcal{SV}_{1,\dots,1}^{\underline{n}}$ is a Segre variety. 
In this case we write $\mathcal{S}^{\underline{n}}$ for $\mathcal{SV}_{1,\dots,1}^{\underline{n}}$, and $\sigma^{\underline{n}}$ for the Segre embedding.
Note that 
$$
\sigma\nu_{\underline{d}}^{\underline{n}}=\sigma^{\underline{n}'}\circ \left(\nu_{d_1}^{n_1}\times \dots \times \nu_{d_p}^{n_p}\right),
$$
where $\underline{n}'=(N(n_1,d_1),\dots,N(n_p,d_p))$.

\begin{Remark}\label{pd} If a polynomial $F\in K[x_0,...,x_n]_d$ admits a decomposition in $h$ powers then $F\in\Sec_h(\mathcal{V}_{d}^{n})$, and conversely a general $F\in\Sec_h(\mathcal{V}_{d}^{n})$ can be written as a sum of $h$ powers. If 
$$
F = \lambda_1L^{d}_{1}+...+\lambda_hL^{d}_{h}
$$
is a decomposition then the partial derivatives of order $s$ of $F$ can be decomposed as a linear combination of $L^{d-s}_{1},...,L^{d-s}_{h}$ as well.

These partial derivatives are $\binom{n+s}{n}$ homogeneous polynomials of degree $d-s$ spanning a linear space $H_{\partial F}^{s}\subseteq \mathbb{P}(K[x_0,...,x_n]_{d-s})$. Therefore, the linear space $\left\langle L_1^{d-s},\dots,L_h^{d-s}\right\rangle$ contains $H_{\partial F}^{s}$.
\end{Remark}

\subsection{Flattenings}\label{flat}
Let $V_1,\dots,V_{p}$ be $K$-vector spaces of finite dimension, and consider the tensor product $V_1\otimes ...\otimes V_{p} = (V_{a_1}\otimes ...\otimes V_{a_s})\otimes (V_{b_1}\otimes ...\otimes V_{b_{p-s}})= V_{A}\otimes V_{B}$ with $A\cup B = \{1,...,p\}$, $A=\{a_1,\dots,a_p\}$ and $B = A^c = \{b_1,\dots, b_{p-s}\}$. Then we may interpret a tensor 
$$T \in V_1\otimes ...\otimes V_p = V_{A}\otimes V_{B}$$
as a linear map $\widetilde{T}:V_{A}^{*}\rightarrow V_{A^c}$. Clearly, if the rank of $T$ is at most $r$ then the rank of $\widetilde{T}$ is at most $r$ as well. Indeed, a decomposition of $T$ as a linear combination of $r$ rank one tensors yields a linear subspace of $V_{A^c}$, generated by the corresponding rank one tensors, containing $\widetilde{T}(V_A^{*})\subseteq V_{A^c}$. The matrix associated to the linear map $\widetilde{T}$ is called an \textit{$(A,B)$-flattening} of $T$.    

In the case of mixed tensors we can consider the embedding
$$\Sym^{d_1}V_1\otimes ...\otimes \Sym^{d_p}V_p\hookrightarrow V_A\otimes V_B$$
where $V_A = \Sym^{a_1}V_1\otimes ...\otimes \Sym^{a_p}V_p$,
$V_B=\Sym^{b_1}V_1\otimes ...\otimes\Sym^{b_p}V_p$, with $d_i =
a_i+b_i$ for any $i = 1,...,p$. In particular, if $n = 1$ we may
interpret a tensor $F\in \Sym^{d_1}V_1$ as a degree $d_1$ homogeneous
polynomial on $\mathbb{P}(V_1^*)$. In this case the matrix associated
to the linear map $\widetilde{F}:V_A^*\rightarrow V_B$ is nothing but
the $a_1$-th \textit{catalecticant matrix} of $F$, that is the matrix
whose lines are the coefficient of the partial derivatives of order
$a_1$ of $F$. This identifies the linear space $H_{\partial F}^{s}$ in
Remark \ref{pd} with $\mathbb{P}(\widetilde{F}(V_A^*))\subseteq
\mathbb{P}(V_B)$, where $a_1 = s$, $b_1 = d-a_1 = d-s$.

\begin{Proposition}\cite[Propositions 3.1, 3.2]{MMS18}\label{prop1gen}
Let $F\in k[x_0,...,x_n]_d$ be a polynomial admitting a decomposition $F = \sum_{i=1}^h\lambda_iL_i^d$, $s$ an integer such that ${n+s\choose n}\geq h>{n+s-1\choose n} $, and assume that
\begin{itemize}
\item[i)] the linear space $H_{\partial F}^{s}$ generated by the partial derivatives of order $s$ of $F$ has dimension $h-1$,
\item[ii)] $\dim(H_{\partial F}^{s}\cap \mathcal{V}_{d-s}^n) = 0$,
\item[iii)]  $\deg(H_{\partial F}^{s}\cap \mathcal{V}_{d-s}^n) = h$.
\end{itemize}
Then $F$ is $h$-identifiable and it has rank $h$.  Furthermore, the criterion is effective when ${n+d-s\choose n}>h+n$.  
\end{Proposition}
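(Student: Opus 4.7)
The plan is to extract from Remark \ref{pd} the containment $H_{\partial F}^{s}\subseteq \langle L_1^{d-s},\dots,L_h^{d-s}\rangle$ and leverage it against the three hypotheses to pin down the set $\{[L_i^{d-s}]\}$ as exactly the intersection $H_{\partial F}^{s}\cap\mathcal{V}_{d-s}^n$. From that, identifiability follows because any competing decomposition is forced to produce the same $h$ points of the Veronese. The last step is to check that the hypotheses are generic under the stated bound.

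First, I would argue that under (i) one has \emph{equality} $H_{\partial F}^{s}= \langle L_1^{d-s},\dots,L_h^{d-s}\rangle$, and in particular the powers $L_1^{d-s},\dots,L_h^{d-s}$ are linearly independent. Indeed, by Remark \ref{pd} the partial derivatives of order $s$ lie in the span of the $L_i^{d-s}$, so $\dim H_{\partial F}^{s}\le\dim\langle L_i^{d-s}\rangle -1\le h-1$; since hypothesis (i) forces the left-hand side to equal $h-1$, both inequalities must be equalities. Consequently $[L_1^{d-s}],\dots,[L_h^{d-s}]$ are $h$ distinct points lying in $H_{\partial F}^{s}\cap \mathcal{V}_{d-s}^n$.

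Next, I combine (ii) and (iii): the scheme $H_{\partial F}^{s}\cap\mathcal{V}_{d-s}^n$ is zero-dimensional of degree $h$ and already contains the $h$ reduced points $[L_i^{d-s}]$, so these are \emph{exactly} the points of the intersection, with reduced scheme structure. Now identifiability is immediate: if $F=\sum_{j=1}^{h'}\mu_j M_j^d$ is any other length-$h'$ decomposition with $h'\le h$, then repeating the argument gives $H_{\partial F}^{s}\subseteq \langle M_1^{d-s},\dots,M_{h'}^{d-s}\rangle$. Dimension-counting forces $h'=h$ and $[M_j^{d-s}]\in H_{\partial F}^{s}\cap\mathcal{V}_{d-s}^n=\{[L_1^{d-s}],\dots,[L_h^{d-s}]\}$, so $\{M_j\}=\{L_i\}$ up to permutation and scaling (the $(d-s)$-th root ambiguity on each $L_i$ being absorbed into $\lambda_i$). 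The same dimension estimate rules out a shorter decomposition, so $\rk(F)=h$.

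For effectiveness I need to verify that conditions (i)--(iii) are satisfied on a dense open subset of $\Sec_h(\mathcal{V}_d^n)$ under the bound $\binom{n+d-s}{n}>h+n$. Condition (i) is generic because the hypothesis ${n+s\choose n}\ge h$ leaves enough room for the $\binom{n+s}{n}$ derivatives to span the maximal possible $(h-1)$-plane. For (ii) and (iii) the key point is that $\dim H_{\partial F}^s + \dim\mathcal{V}_{d-s}^n = (h-1)+n < \binom{n+d-s}{n}-1$, so a general $(h-1)$-plane meets $\mathcal{V}_{d-s}^n$ in the empty set; thus the locus where $H_{\partial F}^s\cap\mathcal{V}_{d-s}^n$ fails to be zero-dimensional of length $h$ is a proper closed subset of the parameter space of $h$-tuples $(L_1,\dots,L_h)$, and since we have already produced $h$ reduced points in the intersection we get both (ii) and degree exactly $h$ by upper semicontinuity of fiber length.

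The main obstacle will be this last semicontinuity/genericity step: one must carefully argue that excess intersection beyond the $h$ visible points cannot occur generically, which amounts to showing that a general $(h-1)$-secant $h$-plane to $\mathcal{V}_{d-s}^n$ (in the $h+n < \binom{n+d-s}{n}$ regime) is a transverse secant plane. The cleanest route is to observe that the incidence correspondence of pairs $\bigl((L_1,\dots,L_h),\,[G]\bigr)$ with $[G]\in H_{\partial F}^{s}\cap\mathcal{V}_{d-s}^n\setminus\{[L_i^{d-s}]\}$ has dimension strictly less than that of the parameter space of $h$-tuples, so its image is a proper closed subset; everything else follows from standard openness arguments.
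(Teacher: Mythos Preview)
The paper does not supply its own proof of this proposition; it is simply quoted from \cite[Propositions 3.1, 3.2]{MMS18}. So there is nothing in the paper to compare against directly, and your proposal has to be judged on its own.

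Your argument for the first part (rank $h$ and $h$-identifiability) is correct and is the standard one. The chain $H_{\partial F}^s\subseteq\langle L_i^{d-s}\rangle$ from Remark~\ref{pd}, together with hypothesis (i), forces equality of these spans and linear independence of the $L_i^{d-s}$; then (ii)--(iii) pin down $H_{\partial F}^s\cap\mathcal{V}_{d-s}^n$ set-theoretically as $\{[L_i^{d-s}]\}$, and any competing decomposition of length $\le h$ is forced to reuse the same points. This is exactly the mechanism behind the catalecticant Algorithm~\ref{cat_alg} that the paper records immediately afterwards.

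For the effectiveness claim your sketch is on the right track but could be tightened. The point is not that a \emph{general} $(h-1)$-plane misses $\mathcal{V}_{d-s}^n$ (it does, under the stated inequality), but that a \emph{general $h$-secant} $(h-1)$-plane meets it in exactly the $h$ chosen points. That is precisely the content of the Trisecant lemma \cite[Proposition 2.6]{CC02}, which the paper itself invokes later in the proof of Theorem~\ref{gen2}; citing it directly is cleaner than the incidence-correspondence outline you give, though your outline would also work. Condition (i) being generic is the statement that the $s$-th catalecticant of a general rank-$h$ form has rank $h$ when $\binom{n+s}{n}\ge h$, which is standard (and implicit in the non-defectivity of the relevant secant).
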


\begin{Alg}(Catalecticant Algorithm \cite[Section 5.4]{IK99})\label{cat_alg}\\
\textbf{Input:} $F\in K[x_0,\dots,n_n]_d$ admitting a decomposition in $h$ powers.
\begin{itemize}
\item[-] Construct the subspace $H_{\partial F}^{s}\subset\mathbb{P}^{N(n,d-s)}$ with $s = \lceil\frac{d}{2}\rceil$. 
\item[-] If either $\dim(H_{\partial F}^{s})\neq h-1$ or $\dim(H_{\partial F}^{s}\cap \mathcal{V}_{d-s}^n) \neq 0$ or $\deg(H_{\partial F}^{s}\cap \mathcal{V}_{d-s}^n) \neq h$ the algorithm fails. 
\item[-] Otherwise compute the intersection $H_{\partial F}^{s}\cap \mathcal{V}_{d-s}^n = \{L_1^{d-s},\dots,L_{h}^{d-s}\}$.
\item[-] Solve the linear system $F = \sum_{i=1}^h \lambda_iL_i^d$ in the unknowns $\lambda_i\in K$.
\end{itemize}
\end{Alg}

Proposition \ref{prop1gen} can be extended to the mixed case as follows. 

\begin{Proposition}\cite[Propositions 3.1, 3.2]{MMS18}\label{prop2gen}
Let $T\in \Sym^{d_1}V_1\otimes ...\otimes \Sym^{d_p}V_p$ be a tensor
admitting a decomposition $T = \sum_{i=1}^h\lambda_i U_i$. Fix an $(A,B)$-flattening $\widetilde{T}:V_A^*\rightarrow V_B$ of $T$ such that $N(\underline{n},\underline{a})\geq h$, and assume that
\begin{itemize}
\item[i)] the linear space $\mathbb{P}(\widetilde{T}(V_A^*))$ has dimension $h-1$,
\item[ii)] $\dim(\mathbb{P}(\widetilde{T}(V_A^*))\cap \mathcal{SV}_{\underline{b}}^{\underline{n}}) = 0$,
\item[iii)]  $\deg(\mathbb{P}(\widetilde{T}(V_A^*))\cap \mathcal{SV}_{\underline{b}}^{\underline{n}}) = h$.
\end{itemize}
where $\underline{b} = (b_1,...,b_n)$. Then $T$ is $h$-identifiable and it has rank $h$. Furthermore, the criterion is effective when
$N(\underline{n},\underline{b})>h+\dim(\mathcal{SV}_{\underline{b}}^{\underline{n}})$.
\end{Proposition}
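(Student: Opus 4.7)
The plan is to follow the argument of Proposition \ref{prop1gen} with the catalecticant flattening replaced by the general $(A,B)$-flattening. Write each rank-one summand as $U_i = v_{i,1}^{d_1}\otimes\cdots\otimes v_{i,p}^{d_p}$. The factorizations $d_j = a_j + b_j$ realize $\Sym^{d_j}V_j$ as a subspace of $\Sym^{a_j}V_j\otimes\Sym^{b_j}V_j$, under which the flattening $\widetilde{U}_i\colon V_A^*\to V_B$ has one-dimensional image spanned by $P_i := v_{i,1}^{b_1}\otimes\cdots\otimes v_{i,p}^{b_p}$, which is a point of $\mathcal{SV}_{\underline{b}}^{\underline{n}}$. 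By linearity of the flattening, $\widetilde{T}(V_A^*)\subseteq \langle P_1,\dots,P_h\rangle$ inside $\mathbb{P}(V_B)$.

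Assumption (i) forces this containment to be an equality and the $h$ points $P_1,\dots,P_h$ to be linearly independent; in particular they all lie in $\mathbb{P}(\widetilde{T}(V_A^*))\cap \mathcal{SV}_{\underline{b}}^{\underline{n}}$. Assumptions (ii) and (iii) say this intersection is zero-dimensional of length exactly $h$, so it coincides, as a reduced scheme, with $\{P_1,\dots,P_h\}$. Thus the $P_i$ are intrinsically attached to $T$ by the flattening. From each $P_i\in \mathcal{SV}_{\underline{b}}^{\underline{n}}$ one recovers the tuple $([v_{i,1}],\dots,[v_{i,p}])$, and hence $U_i$, uniquely up to scalar, since the Segre-Veronese embedding is injective on the factors with $b_j>0$. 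The coefficients $\lambda_i$ are then determined by solving the linear system $T = \sum\lambda_i U_i$. This proves $h$-identifiability. For the rank claim, any decomposition of length $h' < h$ would yield $\dim\widetilde{T}(V_A^*)\leq h'$, contradicting (i); hence $\rk(T)=h$.

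For effectiveness I would verify that conditions (i)-(iii) hold on a dense open subset of $\Sec_h(\mathcal{SV}_{\underline{d}}^{\underline{n}})$. Condition (i) holds generically because $N(\underline{n},\underline{a})\geq h$ leaves enough room for $h$ general $P_i$'s to be linearly independent, and hence for the flattening image to have the expected dimension $h-1$. Conditions (ii) and (iii) are controlled by the numerical hypothesis $N(\underline{n},\underline{b}) > h + \dim\mathcal{SV}_{\underline{b}}^{\underline{n}}$: a general $(h-1)$-plane in $\mathbb{P}(V_B)$ meets $\mathcal{SV}_{\underline{b}}^{\underline{n}}$ in the expected zero-dimensional reduced scheme by a standard codimension count.

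The main obstacle, as in the symmetric case, is not the set-up but the effectiveness half: one must rule out that the specific $(h-1)$-plane $\mathbb{P}(\widetilde{T}(V_A^*))$, which is already forced to contain the $h$ points $P_1,\dots,P_h$, picks up a spurious extra intersection with $\mathcal{SV}_{\underline{b}}^{\underline{n}}$ or becomes tangent to it somewhere. This is a generalized-trisecant type argument: by a dimension count on the incidence correspondence of $(h-1)$-planes meeting $\mathcal{SV}_{\underline{b}}^{\underline{n}}$ in at least $h+1$ points or with non-reduced structure, such planes form a proper closed subset of the family of $(h-1)$-planes through $h$ general points precisely when $N(\underline{n},\underline{b}) > h + \dim\mathcal{SV}_{\underline{b}}^{\underline{n}}$. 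This is what the cited \cite[Propositions 3.1, 3.2]{MMS18} establish in the Veronese case, and the argument carries over to Segre-Veronese without essential change.
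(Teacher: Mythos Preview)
The paper does not supply its own proof of this proposition; it is quoted from \cite[Propositions 3.1, 3.2]{MMS18} as background for Algorithm \ref{cat_alg_sv}, so there is no in-paper argument to compare yours against.

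On its own merits your argument is the standard one and is essentially correct. One small point deserves care: from $P_i$ you recover $[v_{i,j}]$ only for those indices $j$ with $b_j>0$, since when $b_j=0$ the Segre--Veronese map $\sigma\nu_{\underline b}^{\underline n}$ collapses the $j$-th factor and $P_i$ carries no information about $v_{i,j}$. Your phrasing ``injective on the factors with $b_j>0$'' acknowledges this, but the next clause (``hence $U_i$'') does not follow without the missing factors. In practice one either takes all $b_j\geq 1$, as Algorithm \ref{cat_alg_sv} implicitly does, or one reads off the remaining $[v_{i,j}]$ from the transpose flattening $V_B^*\to V_A$, whose image lies in the span of the $Q_i=v_{i,1}^{a_1}\otimes\cdots\otimes v_{i,p}^{a_p}$; since $a_j+b_j=d_j\geq 1$, every factor is captured by one side or the other. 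Your effectiveness sketch via a trisecant-type incidence count is exactly the approach of \cite{MMS18}: the inequality $N(\underline n,\underline b)>h+\dim\mathcal{SV}_{\underline b}^{\underline n}$ is precisely what guarantees that a general $(h-1)$-plane spanned by $h$ points of $\mathcal{SV}_{\underline b}^{\underline n}$ meets it in no further points and transversally.
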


\begin{Alg}(Catalecticant Algorithm for Segre-Veronese)\label{cat_alg_sv}\\
\textbf{Input:} $T\in \Sym^{d_1}V_1\otimes ...\otimes \Sym^{d_p}V_p$ admitting a decomposition in $h$ rank one tensors $U_i = v_{1,i}^{d_1}\otimes\dots\otimes v_{p,i}^{d_p}$ for $i = 1,\dots,h$.
\begin{itemize}
\item[-] Fix an $(A,B)$-flattening $\widetilde{T}:V_A^*\rightarrow V_B$ of $T$ such that $N(\underline{n},\underline{a})\geq h$, and consider the subspace $\mathbb{P}(\widetilde{T}(V_A^*))$. 
\item[-] If either $\dim(\mathbb{P}(\widetilde{T}(V_A^*))\neq h-1$ or $\dim(\mathbb{P}(\widetilde{T}(V_A^*))\cap \mathcal{SV}_{\underline{b}}^{\underline{n}}) \neq 0$ or $\deg(\mathbb{P}(\widetilde{T}(V_A^*))\cap \mathcal{SV}_{\underline{b}}^{\underline{n}}) \neq h$ for all $(A,B)$-flattenings the algorithm fails. 
\item[-] Otherwise compute the intersection $\mathbb{P}(\widetilde{T}(V_A^*))\cap \mathcal{SV}_{\underline{b}}^{\underline{n}} = \{U_1^{b},\dots,U_h^{b}\}$ where $U_i^{\underline{b}} = v_{1,i}^{b_1}\otimes\dots\otimes v_{p,i}^{b_p}$.
\item[-] Solve the linear system $F = \sum_{i=1}^h \lambda_i U_i$, with $U_i = v_{1,i}^{d_1}\otimes\dots\otimes v_{p,i}^{d_p}$, in the unknowns $\lambda_i\in K$.
\end{itemize}
\end{Alg}

\section{Generalized catalecticant method}\label{GCM}
In this section we introduce our main method in the symmetric case. As a warm-up we begin by considering first partial derivatives.

\begin{Proposition}\label{gen}
Let $F\in K[x_0,\dots,x_n]_d$ a polynomial admitting a decomposition of the form $F = \sum_{i=1}^h\lambda_iL_i^d$. If
\begin{itemize}
\item[(i)] $H_{\partial F}^1\cap\Sec_{h-n}(\mathcal{V}_{d-1}^n)$ has dimension zero and degree $\binom{h}{n}$, and
\item[(ii)] $H_{\partial F}^1\cap\Sec_{h-n-1}(\mathcal{V}_{d-1}^n)$ contains less than $\binom{h-1}{n}$ points 
\end{itemize} 
then $F$ has rank $h$ and it is $h$-identifiable.
\end{Proposition}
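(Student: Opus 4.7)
My plan is to translate both hypotheses into combinatorial statements about the hyperplane arrangement $\mathcal{A}_L=\{L_1=0,\dots,L_h=0\}\subset\P^n$, via the linear parametrization
\[
\psi\colon K^{n+1}\longrightarrow K[x_0,\dots,x_n]_{d-1},\qquad c\longmapsto\sum_{j=0}^n c_j\,\frac{\partial F}{\partial x_j},
\]
whose image spans $H_{\partial F}^1$. Substituting $F=\sum_i\lambda_i L_i^d$ gives
\[
\psi(c)=d\sum_{i=1}^h\lambda_i\,L_i(c)\,L_i^{d-1},
\]
so a point $\psi(c)\in H_{\partial F}^1$ lies in $\Sec_{h-n}(\mathcal{V}_{d-1}^n)$ exactly when at least $n$ of the scalars $\lambda_i L_i(c)$ vanish, equivalently when $c\in\P^n$ is a vertex of $\mathcal{A}_L$. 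A generic arrangement of $h$ hyperplanes has $\binom{h}{n}$ such vertices, and hypothesis (i) says they account for the entire scheme-theoretic intersection; injectivity of $\psi$, forced by the non-degeneracy implicit in (i) once $h\ge n+1$ (otherwise the $L_i$'s would fail to span $V^*$), identifies the vertex set with the set of intersection points.

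For the rank claim I would argue by contradiction: if $\rk(F)\le h-1$, choose a decomposition $F=\sum_{i=1}^{h-1}\mu_i M_i^d$ and apply the same construction with the $M_i$'s in place of the $L_i$'s. This produces $\binom{h-1}{n}$ points of $H_{\partial F}^1\cap\Sec_{(h-1)-n}(\mathcal{V}_{d-1}^n)$, which literally equals $H_{\partial F}^1\cap\Sec_{h-n-1}(\mathcal{V}_{d-1}^n)$, contradicting (ii). Together with the given $h$-term decomposition this forces $\rk(F)=h$.

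For $h$-identifiability, suppose $F=\sum_i\lambda_i L_i^d=\sum_i\mu_i M_i^d$ are two length-$h$ decompositions. Since $\psi$ depends only on $F$, the same $\binom{h}{n}$ intersection points are simultaneously described by the $L$-vertices and the $M$-vertices in $\P^n$; injectivity of $\psi$ then forces these two vertex sets to coincide as a subset $\Sigma\subset\P^n$. I would then invoke the star configuration structure: the ideal of $\Sigma$ equals $\bigcap_{T\subset[h],\,|T|=n}(L_{t_1},\dots,L_{t_n})$, is generated by the $(h-n+1)$-fold products of the $L_i$'s, and factoring these recovers each $L_i$ uniquely up to scalar and permutation. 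Hence $\{L_i\}=\{M_i\}$, establishing uniqueness.

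The main obstacle is this last reconstruction step, namely that the common vertex set $\Sigma$ genuinely determines the underlying hyperplane arrangement. This requires $\Sigma$ to be in sufficiently general position, which must be extracted from the two hypotheses: hypothesis (ii) in particular rules out vertices lying on more than $n$ of the $L_i$'s (such coincidences would push $\psi(c)$ into the smaller secant $\Sec_{h-n-1}$), thereby guaranteeing $\Sigma$ is a genuine star configuration rather than a degeneration of one. A subsidiary care-point is the passage from scheme-theoretic to set-theoretic counts in (i): one must verify that the $\binom{h}{n}$ points produced from each decomposition are reduced contributions, so that the two decompositions really do fit inside a single length-$\binom{h}{n}$ scheme and must therefore yield the same underlying set $\Sigma$.
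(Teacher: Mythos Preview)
Your approach matches the paper's: both parametrize $H_{\partial F}^1$ by $c\mapsto d\sum_i\lambda_i L_i(c)\,L_i^{d-1}$, read off that each $n$-fold intersection of the $\{L_i=0\}$ lands in $\Sec_{h-n}(\mathcal{V}_{d-1}^n)$, and use (ii) to exclude rank $\le h-1$. For identifiability the paper is terser than you: it simply asserts that a second decomposition $\sum_j\mu_j l_j^d$ would contribute additional points beyond $\binom{h}{n}$ to the intersection, contradicting (i). Your contrapositive formulation---both vertex sets sit inside a set of size $\binom{h}{n}$, hence coincide in $\P^n$ via injectivity of $\psi$, hence the hyperplane arrangement is recovered from its vertex set---is more explicit, and correctly isolates the one nontrivial step. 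The paper extracts precisely this reconstruction step as the subsequent Lemma~3.4 (any hyperplane through $\binom{h-1}{n-1}$ of the vertices of a general arrangement must be one of the $L_i$) and invokes it in the ensuing algorithm rather than inside the proof of the proposition itself. Your subsidiary care-points about injectivity of $\psi$, reducedness of the $\binom{h}{n}$ contributions, and hypothesis (ii) ruling out $n{+}1$ concurrent hyperplanes are legitimate details that the paper's argument leaves implicit.
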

\begin{proof}
Write 
$$F = (\alpha_0^1x_0+\dots+\alpha_n^1x_n)^d+\dots + (\alpha_0^hx_0+\dots+\alpha_n^hx_n)^d$$ 
Then
$$\frac{\partial F}{\partial x_j} = d(\alpha_j^1L_1^{d-1}+\dots + \alpha_j^hL_h^{d-1})$$
where $L_i = \alpha_0^ix_0+\dots+\alpha_n^ix_n$. For $\xi = [\xi_0:\dots:\xi_n]\in\mathbb{P}^n$ we have
$$\xi_0\frac{\partial F}{\partial x_0}+\dots + \xi_n\frac{\partial F}{\partial x_n} = d(L_1(\xi)L_1^{d-1}+\dots +L_h(\xi)L_{h}^{d-1})$$
Any point $D(F)(\xi)\in H_{\partial F}^1$ can be written in the above form. Therefore, if $n$ of the linear forms $L_i$ vanish at a point $\xi\in\mathbb{P}^n$ then $D(F)(\xi) \in \Sec_{h-n}(\mathcal{V}_{d-1}^n)$. This determines $\binom{h}{n}$ points in the intersection $H_{\partial F}^1\cap\Sec_{h-n}(\mathcal{V}_{d-1}^n)$  .

Now, if $F$ is a linear combination of $h-1$ powers of linear forms the same argument will determine $\binom{h-1}{n}$ points in $H_{\partial F}^1\cap\Sec_{h-n-1}(\mathcal{V}_{d-1}^n)$, and this contradicts (ii).

Furthermore, if $F = \sum_{i=1}^h\lambda_iL_i^d = \sum_{j=1}^{h}\mu_j l_j^d$ admits two different decompositions the argument above shows that we would have more than $\binom{h}{n}$ points in $H_{\partial F}^1\cap(\Sec_{h-n}(\mathcal{V}_{d-1}^n))$, and this would contradict (i).
\end{proof}

\begin{Remark}\label{rem_gen}
Since, with the exceptions in Alexander-Hirshowitz's theorem \cite{AH95}, we have that $\dim(\Sec_{h-n}(\mathcal{V}_{d-1}^n)) = (h-n)n+h-n-1$, and whenever the partial derivatives of $F$ are independent $\dim(H^1_{\partial F}) = n$, for condition (i) in Proposition \ref{gen} to hold we must have 
$$h < B_{n,d} := \frac{\binom{d-1+n}{n}+n^2}{n+1}.$$

The catalecticant method, which works at its best for even degree $d = 2k$, produces the bound $h\leq \binom{n+k}{k}$. Note that this binomial coefficients is in general much smaller than $B_{n,d}$.  
\end{Remark}

\begin{Remark}\label{rem_cub}
Take $d = 3$. Since $\codim_{\mathbb{P}^{N(n,2)}}\Sec_{h-n}(\mathcal{V}_2^n) = \frac{n^2+3n-(h-n)(3n-h+3)+2}{2}$ for condition (i) in Proposition \ref{gen} to hold we need to have $h < \frac{4n-\sqrt{8n+1}+3}{2}$. The equality $h = \frac{4n-\sqrt{8n+1}+3}{2}$ is also admissible for $n = 1,3$ since in these cases we have $\codim_{\mathbb{P}^{N(n,2)}}\Sec_{h-n}(\mathcal{V}_2^n) = n$ but also $\deg(\Sec_{h-n}(\mathcal{V}_2^n)) = \binom{h}{n}$. In general $\codim_{\mathbb{P}^{N(n,2)}}\Sec_{h-n}(\mathcal{V}_2^n) = n$ holds if and only if $n$ is a triangular number, that is of the form $\binom{k+1}{2}$. However, if the codimension of $\Sec_{h-n}(\mathcal{V}_2^n)$ is $n$ and $n > 3$ then $\deg(\Sec_{h-n}(\mathcal{V}_2^n)) > \binom{h}{n}$. So condition (i) in Proposition \ref{gen} can not hold. 

When $(n,h) \in \{(1,2),(3,5)\}$ we have that $\deg(\Sec_{h-n}(\mathcal{V}_2^n)) = \binom{h}{n}$. The case $(n,h) = (3,5)$ is known as Sylvester's pentahedral theorem.
\end{Remark}

\begin{Lemma}\label{comb}
Let $\Pi_1,\dots,\Pi_h \subset\mathbb{P}^n$ be $h\geq n+1$ general hyperplanes. Consider the points $\xi_{j_1,\dots,j_{n}} = \Pi_{j_1}\cap\dots\cap\Pi_{j_{n}}$. If $\Pi'\subset\mathbb{P}^n$ is a hyperplane containing $\binom{h-1}{n-1}$ of the $\xi_{j_1,\dots,j_{n}}$ then $\Pi' = \Pi_i$ for some $i = 1,\dots,h$.
\end{Lemma}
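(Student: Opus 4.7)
The plan is to proceed by induction on $n$. The base case $n=1$ is essentially immediate: hyperplanes in $\mathbb{P}^1$ are points, $\xi_{j_1}=\Pi_{j_1}$, and $\binom{h-1}{0}=1$, so the point $\Pi'$ containing one of them coincides with it.

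For the inductive step, I would argue by contradiction: suppose $\Pi'\neq\Pi_i$ for every $i$. For each $i$, let $a_i$ be the number of configuration points $\xi_{j_1,\dots,j_n}$ lying on both $\Pi'$ and $\Pi_i$. Since each such point sits on exactly $n$ of the $\Pi_i$'s, a double count gives
$$\sum_{i=1}^h a_i \;=\; n\binom{h-1}{n-1}.$$
The key geometric input is that restriction to $\Pi_i\cong\mathbb{P}^{n-1}$ yields a star configuration in its own right: the $h-1$ hyperplanes $\Pi_j\cap\Pi_i$ (for $j\neq i$) are in general position inside $\Pi_i$, and the configuration points lying on $\Pi_i$ are exactly their $(n-1)$-fold intersections, of which there are $\binom{h-1}{n-1}$. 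Since $\Pi'\neq\Pi_i$, the trace $\Pi'\cap\Pi_i$ is a hyperplane of $\Pi_i$ containing $a_i$ of these points, so the inductive hypothesis applied to $\mathbb{P}^{n-1}$ with $h-1$ hyperplanes says: if $a_i\geq \binom{h-2}{n-2}$, then $\Pi'\cap\Pi_i=\Pi_{\sigma(i)}\cap\Pi_i$ for some $\sigma(i)\neq i$, in which case general position forces $a_i=\binom{h-2}{n-2}$ exactly (the points on $\Pi_{\sigma(i)}\cap\Pi_i$ being precisely those whose index set contains both $i$ and $\sigma(i)$).

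Either way, $a_i\leq \binom{h-2}{n-2}$ for every $i$, so summing gives
$$n\binom{h-1}{n-1} \;=\; \sum_{i=1}^h a_i \;\leq\; h\binom{h-2}{n-2}.$$
Using the identity $\binom{h-1}{n-1}=\tfrac{h-1}{n-1}\binom{h-2}{n-2}$, this reduces to $n(h-1)\leq h(n-1)$, i.e.\ $h\leq n$, which contradicts the hypothesis $h\geq n+1$.

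The points requiring verification are the two bookkeeping items that feed into the induction: first, that the restricted hyperplanes $\Pi_j\cap\Pi_i$ inherit general position inside $\Pi_i$ (which follows immediately from the codimension characterization of general position, since intersecting $k$ of them amounts to intersecting $k+1$ of the original hyperplanes with $\Pi_i$), and second, that a configuration point $\xi_{j_1,\dots,j_n}$ lies on $\Pi_k$ if and only if $k\in\{j_1,\dots,j_n\}$ (otherwise $n+1$ of the $\Pi_i$'s would meet, again violating general position). With these secured, the substance of the argument is the clean arithmetic inequality that fails precisely when $h>n$, so I do not anticipate any serious obstacle beyond the inductive setup.
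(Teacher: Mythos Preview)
Your proof is correct and takes a genuinely different route from the paper. The paper argues directly by pigeonhole: after disposing of small cases so that $h<\binom{h-1}{n-1}$, it picks $h$ of the configuration points on $\Pi'$, notes that each involves $n$ of the $\Pi_i$, and concludes by averaging that some fixed $\Pi_1$ carries at least $n$ of them; it then asserts, appealing to genericity of the $\Pi_i$, that these $n$ points are in linearly general position inside $\Pi_1$, forcing $\Pi'=\Pi_1$. Your induction on $n$, with the double count $\sum_i a_i=n\binom{h-1}{n-1}$ played off against the uniform bound $a_i\le\binom{h-2}{n-2}$ coming from the inductive hypothesis on each $\Pi_i\cong\mathbb{P}^{n-1}$, is longer but entirely self-contained: every step uses only the codimension characterization of linearly general position, so you in fact establish the lemma for hyperplanes in linearly general position rather than merely Zariski-general ones. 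The paper's argument is brief but leaves the general-position claim about the specific $n$ points landing on $\Pi_1$ unjustified (and some $n$-element subsets of the configuration points on $\Pi_1$ \emph{are} linearly dependent, e.g.\ those lying on a fixed $\Pi_1\cap\Pi_2$, so a more careful choice would be needed); your approach trades that shortcut for transparency and a slightly stronger conclusion.
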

\begin{proof}

If $h=n+1$ then $\binom{h-1}{n-1}=n$ and the result is straightforward. If $n=2$ and $h=4$ the result is immediate. Hence, we may assume that $h< {\binom{h-1}{n-1}}$.

By hypothesis $\Pi'$ contains $\binom{h-1}{n-1}$ of the $\xi_{j_1,\dots,j_{n-1}}$, let us denote them by $\xi_1,\dots,\xi_{\binom{h-1}{n-1}}$. In order to cut out $\xi_j$ we must choose $n$ of the $\Pi_j$ and after choosing $h$ of the points at least one of the $\Pi_j$, say $\Pi_1$ has been chosen at least $n$ times. Since $\Pi_1,\dots,\Pi_h$ are general the $n$ points $\xi_{i_1},\dots,\xi_{i_n}\in \Pi'\cap\Pi_1$ determined by this procedure are in liner general position. So $\Pi' = \Pi_1$. 
\end{proof}

Proposition \ref{gen} suggests the following algorithm for computing the linear forms $L_i$ starting from the polynomial $F$. 

\begin{Alg}\label{cat_g}
\textbf{Input:} $F\in K[x_0,\dots,n_n]_d$ admitting a decomposition in $h$ powers.
\begin{itemize}
\item[-] Compute the intersection $H_{\partial F}^1\cap\Sec_{h-n}(\mathcal{V}_{d-1}^n)$. If the hypotheses of Proposition \ref{gen} are not satisfied then the method fails. 
\item[-] Otherwise the points 
$$\xi_1,\dots,\xi_{\binom{h}{n}}\in H_{\partial F}^1\cap\Sec_{h-n}(\mathcal{V}_{d-1}^n)$$
are the points where $n$ of the $h$ linear forms $L_{1},\dots,L_h$ vanish. Note that on each hyperplane $H_i = \{L_i = 0\}$ there are $\binom{n-1}{h-1}$ of the $\xi_i$. 
\item[-] Among all the sets of $\binom{n-1}{h-1}$ of the $\xi_i$ compute those spanning a hyperplane. 
\item[-] By Lemma \ref{comb} these sets are exactly $h$ and the $h$ hyperplanes spanned by them are the zero loci of the linear forms $L_1,\dots,L_h$. 
\item[-] Solve the linear system $F = \sum_{i=1}^h \lambda_iL_i^d$ in the unknowns $\lambda_i\in K$.
\end{itemize}
\end{Alg}

Next, we generalize Proposition \ref{gen} using higher order partial derivatives.

\begin{Lemma}\label{dersp}
Consider a polynomial $F = \sum_{i=1}^h L_i^d\in K[x_0,\dots,x_n]_d$. Then 
$$
D_{\xi}^s(F):=\sum_{s_0\leq\dots \leq s_n}\xi_{s_0,\dots,s_n}\frac{\partial^s F}{\partial x_0^{s_0}\dots\partial x_n^{s_n}} = \frac{d!}{(d-s)!}\sum_{i = 1}^h \left\langle L_i^s,\xi\right\rangle L_j^{d-s}
$$
for all $\xi = (\xi_{s_0,\dots,s_n})\in\mathbb{P}^{N_s}$, where $N_s = \binom{n+s}{s}-1$.
\end{Lemma}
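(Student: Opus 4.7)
The plan is to reduce everything to a calculation on a single power $L^d$ and then sum, using the fact that the pairing $\langle L_i^s,\xi\rangle$ is precisely the apolar pairing between $\Sym^s V^*$ and its dual $\Sym^s V$, under the natural identification of $\mathbb{P}^{N_s}$ with $\mathbb{P}(\Sym^s V)$ (with $V^*$ having coordinates $x_0,\dots,x_n$).

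First, I would fix a single linear form $L = \alpha_0 x_0+\dots+\alpha_n x_n$ and verify by induction on $s$ (or directly via the multinomial expansion of $L^d$) the formula
$$
\frac{\partial^s L^d}{\partial x_0^{s_0}\cdots \partial x_n^{s_n}} \;=\; \frac{d!}{(d-s)!}\,\alpha_0^{s_0}\cdots \alpha_n^{s_n}\, L^{d-s},
$$
valid for every multi-index $(s_0,\dots,s_n)$ with $s_0+\dots+s_n=s$. This is the only genuine computation in the proof; everything else is bookkeeping.

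Next, by the linearity of partial differentiation I would extend the formula to $F=\sum_{i=1}^h L_i^d$, writing $L_i=\alpha_0^{(i)}x_0+\dots+\alpha_n^{(i)}x_n$, to obtain
$$
\frac{\partial^s F}{\partial x_0^{s_0}\cdots \partial x_n^{s_n}} \;=\; \frac{d!}{(d-s)!}\sum_{i=1}^h \bigl(\alpha_0^{(i)}\bigr)^{s_0}\cdots\bigl(\alpha_n^{(i)}\bigr)^{s_n}\, L_i^{d-s}.
$$
Plugging this into the definition of $D_\xi^s(F)$ and swapping the order of summation gives
$$
D_\xi^s(F) \;=\; \frac{d!}{(d-s)!}\sum_{i=1}^h \Biggl(\sum_{s_0+\dots+s_n=s}\xi_{s_0,\dots,s_n}\,\bigl(\alpha_0^{(i)}\bigr)^{s_0}\cdots\bigl(\alpha_n^{(i)}\bigr)^{s_n}\Biggr) L_i^{d-s}.
$$

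Finally, I would identify the inner sum with $\langle L_i^s,\xi\rangle$. Expanding $L_i^s$ via the multinomial theorem one has $L_i^s=\sum_{|s|=s}\binom{s}{s_0,\dots,s_n}\bigl(\alpha_0^{(i)}\bigr)^{s_0}\cdots\bigl(\alpha_n^{(i)}\bigr)^{s_n} x_0^{s_0}\cdots x_n^{s_n}$, and the natural (apolar) pairing $\Sym^sV^*\times\Sym^sV\to K$ pairs the monomial $x_0^{s_0}\cdots x_n^{s_n}$ with the dual basis element indexed by $(s_0,\dots,s_n)$; absorbing the multinomial coefficients into the chosen dual basis (which is the standard convention making $\xi$ an affine chart on $\mathbb{P}^{N_s}$) shows that the parenthesized expression is exactly $\langle L_i^s,\xi\rangle$. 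This yields the claimed identity. The only subtle point, and the one I would be careful with, is fixing the normalization of the pairing so that the multinomial coefficients disappear — otherwise the equality holds only up to constants.
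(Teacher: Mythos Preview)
Your proof is correct and follows essentially the same route as the paper's: compute $\partial^s L^d/\partial x_0^{s_0}\cdots\partial x_n^{s_n}$, extend by linearity to $F$, swap the order of summation, and recognize the inner sum as $\langle L_i^s,\xi\rangle$. Your treatment is in fact slightly more careful than the paper's about the normalization of the pairing (the multinomial coefficients), which the paper leaves implicit.
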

\begin{proof}
Write $L_i = \alpha_{0,i} x_0+\dots +\alpha_{n,i} x_n$. Then 
$$
\frac{\partial^s F}{\partial x_0^{s_0}\dots \partial x_n^{s_n}} = \frac{d!}{(d-s)!}((\alpha_{0,1}^{s_0}\dots \alpha_{n,1}^{s_n})L_1^{d-s}+\dots +(\alpha_{0,h}^{s_0}\dots \alpha_{n,h}^{s_n})L_{h}^{d-s})
$$
Hence 
$$
\begin{array}{ll}
\sum_{s_0\leq\dots \leq s_n}\xi_{s_0,\dots,s_n}\frac{\partial^s F}{\partial x_0^{s_0}\dots\partial x_n^{s_n}} = & \frac{d!}{(d-s)!}\left(\sum_{s_0\leq\dots\leq s_n}\xi_{s_0,\dots,s_n}\sum_{i=1}^h(\alpha_{0,i}^{s_0}\dots \alpha_{n,i}^{s_n})L_i^{d-s}\right)  \\ 
 & \frac{d!}{(d-s)!}\sum_{i=1}^h\sum_{s_0\leq\dots\leq s_n}(\alpha_{0,i}^{s_0}\dots \alpha_{n,i}^{s_n})\xi_{s_0,\dots,s_n}L_{i}^{d-s}
\end{array} 
$$
that is $D_{\xi}^s(F) = \frac{d!}{(d-s)!}\sum_{i=1}^{h}\left\langle L_i^s,\xi\right\rangle L_{i}^{d-s}$.
\end{proof}

\begin{thm}\label{gen2}
Let $F\in K[x_0,\dots,x_n]_d$ be a homogeneous polynomial admitting a decomposition of the form $F = \sum_{i=1}^h L_i^d$. If
\begin{itemize}
\item[(i)] $H_{\partial F}^s\cap\Sec_{h-N_s}(\mathcal{V}_{d-s}^n)$ has dimension zero and degree $\binom{h}{N_s}$, and
\item[(ii)] $H_{\partial F}^s\cap\Sec_{h-N_s-1}(\mathcal{V}_{d-s}^n)$ contains less than $\binom{h-1}{N_s}$ points;
\end{itemize} 
then $F$ has rank $h$ and it is $h$-identifiable. Furthermore, if $h\leq N_s+2$ the identifiability criterion is effective. 
\end{thm}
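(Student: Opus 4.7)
The plan is to generalize the argument of Proposition \ref{gen} by replacing the ad hoc identity for first derivatives with the general identity supplied by Lemma \ref{dersp}. The combinatorial skeleton is unchanged: from the hypothesized decomposition I will exhibit $\binom{h}{N_s}$ distinct points in $H_{\partial F}^s\cap\Sec_{h-N_s}(\mathcal{V}_{d-s}^n)$, then invoke (i) to show they exhaust the intersection and (ii) to force the decomposition to have length exactly $h$ and to be unique.

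Concretely, I would regard each pairing $\xi\mapsto\langle L_i^s,\xi\rangle$ as a linear form on $\mathbb{P}^{N_s}$ with zero hyperplane $\Pi_i$. For any subset $J\subset\{1,\dots,h\}$ of cardinality $N_s$, generic position of the $\Pi_i$ (automatic for generic $L_i$) makes $\bigcap_{i\in J}\Pi_i$ a single point $\xi_J\in\mathbb{P}^{N_s}$, and Lemma \ref{dersp} specializes to
$$D_{\xi_J}^s(F)=\frac{d!}{(d-s)!}\sum_{i\notin J}\langle L_i^s,\xi_J\rangle\, L_i^{d-s},$$
a combination of only $h-N_s$ of the $L_i^{d-s}$; hence $D_{\xi_J}^s(F)\in H_{\partial F}^s\cap\Sec_{h-N_s}(\mathcal{V}_{d-s}^n)$. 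Ranging over $J$ produces $\binom{h}{N_s}$ such points, which by (i) account for the whole intersection scheme-theoretically. Were $F$ to admit a decomposition with fewer than $h$ summands, the same recipe (after, if necessary, replacing $F$ by a generic $(h-1)$-summand decomposition coming from $F\in\Sec_{h-1}(\mathcal{V}_d^n)$) would yield $\binom{h-1}{N_s}$ points in $H_{\partial F}^s\cap\Sec_{h-N_s-1}(\mathcal{V}_{d-s}^n)$, contradicting (ii); so $\rk(F)=h$. A second genuine decomposition $F=\sum_j N_j^d$ would yield a second collection of $\binom{h}{N_s}$ points in the same zero-dimensional scheme; by (i) the two collections must coincide, and then a Veronese-sensitive analog of Lemma \ref{comb} reconstructs the hyperplanes $\Pi_i$ from this common configuration and forces $\{N_j\}=\{L_i\}$ up to scalars and permutation.

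For effectiveness under $h\leq N_s+2$ I would verify that (i) and (ii) hold for the generic rank-$h$ polynomial. For generic $L_i$ the $\Pi_i$ are in linear general position in $\mathbb{P}^{N_s}$, so the construction above produces $\binom{h}{N_s}$ distinct points; the numerical constraint $h\leq N_s+2$ is exactly what makes the expected dimension of $H_{\partial F}^s\cap\Sec_{h-N_s}(\mathcal{V}_{d-s}^n)$ equal to zero, and upper semicontinuity of fibre dimension then yields an actual zero-dimensional intersection of the predicted degree. Condition (ii) follows because the generic rank-$h$ polynomial does not lie in $\Sec_{h-1}(\mathcal{V}_d^n)$, so the corresponding intersection strictly misses the $\binom{h-1}{N_s}$ threshold. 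The main technical obstacle I foresee is the identifiability step: upgrading coincidence of the two zero-dimensional intersection schemes to coincidence of the two underlying hyperplane configurations in $\mathbb{P}^{N_s}$ requires both an extension of Lemma \ref{comb} to $N_s$-fold intersection points of $h$ general hyperplanes in $\mathbb{P}^{N_s}$ and the extra observation that the $\Pi_i$ have the constrained Veronese form $\{\langle L_i^s,\cdot\rangle=0\}$, so that recovering $\Pi_i$ really recovers $L_i$ up to scaling.
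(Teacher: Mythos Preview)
Your construction of the $\binom{h}{N_s}$ points via Lemma~\ref{dersp} and your argument that $\rk(F)=h$ match the paper. For identifiability the paper argues more directly than you do: it simply asserts that two distinct decompositions would yield strictly more than $\binom{h}{N_s}$ points in $H_{\partial F}^s\cap\Sec_{h-N_s}(\mathcal{V}_{d-s}^n)$, contradicting~(i), and stops there. Your proposed route---showing that if the two point-sets coincide then a Lemma~\ref{comb}-type reconstruction forces the decompositions to coincide---is more careful, but it is not the paper's argument, and as you yourself flag, the reconstruction step (including the passage from the hyperplane $\Pi_i$ back to $L_i$) is where the real work would lie.

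The substantive gap is in your effectiveness argument. Saying that ``$h\le N_s+2$ makes the expected dimension zero and semicontinuity then yields the predicted degree'' does not do the job: an expected-dimension count together with semicontinuity can at best show that the intersection is zero-dimensional for general $F$, but it gives no upper bound on the degree, and in particular does not rule out an \emph{extra} point $G\in H_{\partial F}^s\cap\Sec_{h-N_s}(\mathcal{V}_{d-s}^n)$ beyond the $\binom{h}{N_s}$ you constructed. The paper handles this (in the critical case $h=N_s+2$) by a concrete linear-algebra argument: write $G=\sum_{j=1}^{h-N_s}\alpha_j l_j^{d-s}$ and observe that $G\in H_{\partial F}^s$ also has an expression $G=\sum_i\lambda_i L_i^{d-s}$; combining this with the $N_s+1$ relations expressing the order-$s$ partials of $F$ in the $L_i^{d-s}$ one solves for all the $L_i^{d-s}$ in terms of $H_{\partial F}^s$ and the $l_j^{d-s}$, so that the $(h-1)$-plane $H_L=\langle L_1^{d-s},\dots,L_h^{d-s}\rangle$ equals $\langle H_{\partial F}^s,l_1^{d-s},\dots,l_{h-N_s}^{d-s}\rangle$ and therefore meets $\mathcal{V}_{d-s}^n$ in more than $h$ points. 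Since the $L_i^{d-s}$ are general points of $\mathcal{V}_{d-s}^n$, this contradicts the Trisecant lemma \cite[Proposition~2.6]{CC02}. This Trisecant-lemma step is the missing idea in your proposal.
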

\begin{proof}
Write 
$$F = (\alpha_0^1x_0+\dots+\alpha_n^1x_n)^d+\dots + (\alpha_0^hx_0+\dots+\alpha_n^hx_n)^d.$$ 
By Lemma \ref{dersp}, if $N_s$ of the linear forms $L_i$ vanish at a point $\xi\in\mathbb{P}^{N_s}$ then $D^s_{\xi}(F) \in \Sec_{h-N_s}(\mathcal{V}_{d-s}^n)$. This determines $\binom{h}{N_s}$ points in the intersection $H_{\partial F}^s\cap\Sec_{h-N_s}(\mathcal{V}_{d-s}^n)$.

Now, if $F$ is a linear combination of $h-1$ powers of linear forms the same argument will determine $\binom{h-1}{N_s}$ points in $H_{\partial F}^s\cap\Sec_{h-N_s-1}(\mathcal{V}_{d-s}^n)$, and this contradicts (ii).

Furthermore, if $F = \sum_{i=1}^h\lambda_iL_i^d = \sum_{j=1}^{h}\mu_j l_j^d$ admits two different decompositions the argument above shows that we would have more than $\binom{h}{N_s}$ points in $H_{\partial F}^s\cap\Sec_{h-N_s}(\mathcal{V}_{d-s}^n)$, and this would contradict (i).

Finally, we prove the effectiveness of the criterion for $h\leq N_s+2$. It is enough to consider the case $h = N_s+2$. Take $F\in \Sec_{h}(\mathcal{V}_d^n)$ general, let $H_L$ be the $(h-1)$-plane spanned by the powers $L_i^{d-s}$, and assume that $H_{\partial F}^s$ intersects $\Sec_{h-N_s}(\mathcal{V}_{d-s}^n)$ in an additional point $G$. Then we may write $G = \sum_{j=1}^{h-N_s}\alpha_j l_j^{d-s}$. Set $H_l = \left\langle H_{\partial F}^s,l_1^{d-s},\dots,l_{h-N_s}^{d-s}\right\rangle$. Note that $\dim(H_l)\leq N_s+(h-N_s-1) = h-1$. We may write the $N_s+1$ partial derivatives of order $s$ of $F$ as linear combinations of the $L_i^{d-s}$. Moreover, for the polynomial $G$ we have $G = \sum_{i=1}^{h}\lambda_iL_i^{d-s} = \sum_{j=1}^{h-N_s}\alpha_j l_j^{d-s}$. Hence, we have $N_s+1$ linear equations in the $L_1^{d-s},\dots, L_h^{d-s}$, and since $h = N_s+2$ we may write $L_1^{d-s},\dots, L_h^{d-s}$ as linear combinations of the partial derivatives of order $s$ of $F$ and of $l_1^{d-s},\dots,l_{h-N_s}^{d-s}$. Then $H_L = H_l$ intersects $\mathcal{V}_{d-s}^{n}$ at least in $\{L_1^{d-s},\dots, L_h^{d-s},l_1^{d-s},\dots,l_{h-N_s}^{d-s}\}$. On the other hand, $H_L$ is generated by the $h$ general points $L_1^{d-s},\dots, L_h^{d-s}\in \mathcal{V}_{d-s}^{n}$ and this contradicts the Trisecant lemma \cite[Proposition 2.6]{CC02}.
\end{proof}

\begin{Remark}\label{eff_cub}
As observed in Remark \ref{rem_cub}, Theorem \ref{gen2} gives a criterion for identifiability of cubics for $h < \frac{4n-\sqrt{8n+1}+3}{2}$. Furthermore, by the last part of Theorem \ref{gen} we have that such criterion is effective for $h\leq n+2$. 
\end{Remark}

\section{Uniqueness and finiteness of the decompositions}\label{sec:Hilb}
Let $h(n,d)$ be the minimum integer such that a general $F\in
k[x_0,...,x_n]_d$ admits a decomposition in sum of powers. The number
$h(n,d)$ has been determined in \cite{AH95} and $h(n,d)$-identifiability very seldom holds.
Indeed, by \cite[Theorem 1]{GM19} a general polynomial $F\in k[x_{0},...,x_{n}]_{d}$ is $h(n,d)$-identifiable only in the following cases:
\begin{itemize}
\item[-] $n = 1$, $d = 2m+1$, $h(n,d)=m$ \cite{Sy04};
\item[-] $n = d = 3$, $h(3,3) = 5$ \cite{Sy04};
\item[-] $n = 2$, $d = 5$, $h(2,5) = 7$ \cite{Hi88}.
\end{itemize}
The case $n = 1$ is covered by the catalecticant method, while the second case can be achieved using Proposition \ref{gen}. The next result deals with the third case. 

\begin{Proposition}\label{Hilb}
Let $F\in K[x_0,\dots,x_n]_d$ admitting a decomposition of the form admitting a decomposition of the form $F = \sum_{i=1}^h\lambda_iL_i^d$. Assume that for some $s$ the linear space $H^s_{\partial F}\subset\mathbb{P}(K[x_0,x_1,x_2]_{d-s})$ has dimension $\binom{n+s}{s}-1 = h-2$ and does not intersect $\mathcal{V}_{d-s}^n$. Set $\overline{V}_{d-s}^n := \pi_{H^s_{\partial F}}(\mathcal{V}_n^d)$, where $\pi_{H^s_{\partial F}}:\mathbb{P}^{N(n,d-s)}\dasharrow\mathbb{P}^{M}$ is the projection from $H^s_{\partial F}$. If $\overline{V}_{d-s}^n$ has a unique point of multiplicity $h$ then $F$ is $h$-identifiable. 
\end{Proposition}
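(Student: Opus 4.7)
The plan is to use the projection $\pi_{H^s_{\partial F}}$ to translate uniqueness of the decomposition of $F$ into uniqueness of a distinguished highly singular point of the projected Veronese $\overline{V}_{d-s}^n$. The starting observation, which is essentially Remark \ref{pd}, is that every partial derivative of order $s$ of $F$ lies in $\langle L_1^{d-s},\dots,L_h^{d-s}\rangle$, so
\[
H^s_{\partial F}\subseteq \langle L_1^{d-s},\dots,L_h^{d-s}\rangle.
\]
Since $\dim H^s_{\partial F}=h-2$ and the hypothesis $H^s_{\partial F}\cap\mathcal{V}_{d-s}^n=\emptyset$ forces no $L_i^{d-s}$ to belong to $H^s_{\partial F}$, the powers $L_1^{d-s},\dots,L_h^{d-s}$ must be linearly independent and $H^s_{\partial F}$ must be a hyperplane of their $(h-1)$-dimensional span.

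Next I would analyse the projection. Under $\pi_{H^s_{\partial F}}$ the entire $(h-1)$-plane $\langle L_1^{d-s},\dots,L_h^{d-s}\rangle$ collapses to a single point $p\in\mathbb{P}^M$, so $\pi(L_1^{d-s})=\cdots=\pi(L_h^{d-s})=p$. As the $L_i^{d-s}$ are $h$ distinct points of $\mathcal{V}_{d-s}^n$ lying over $p$, the point $p$ has multiplicity at least $h$ on $\overline{V}_{d-s}^n$. By hypothesis $p$ is the unique such point, and its multiplicity is exactly $h$.

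To conclude $h$-identifiability, suppose $F=\sum_{j=1}^h\mu_j M_j^d$ is a second decomposition. Repeating the previous argument for this decomposition produces a point $p'\in\overline{V}_{d-s}^n$ of multiplicity at least $h$, which by the uniqueness hypothesis must coincide with $p$. Hence $M_1^{d-s},\dots,M_h^{d-s}$ all lie in $\pi_{H^s_{\partial F}}^{-1}(p)\cap\mathcal{V}_{d-s}^n$. Because $p$ has multiplicity exactly $h$, this scheme-theoretic fiber has length $h$ and already contains the $h$ distinct points $L_1^{d-s},\dots,L_h^{d-s}$; thus it equals $\{L_1^{d-s},\dots,L_h^{d-s}\}$, forcing $\{M_j^{d-s}\}=\{L_i^{d-s}\}$ as sets. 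Extracting $(d-s)$-th roots recovers the linear forms up to scaling, and the coefficients are then uniquely determined by the linear independence of $L_1^d,\dots,L_h^d$.

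The main delicate point, and the one on which the statement rests, is the scheme-theoretic interpretation of ``multiplicity $h$''. The argument needs exactly that the only preimages of $p$ on $\mathcal{V}_{d-s}^n$ are the $L_i^{d-s}$, with total length $h$ and no extra tangential contribution; without this, the fiber could contain an alien point $M_j^{d-s}$, destroying uniqueness. Since the multiplicity hypothesis is precisely what guarantees this fibre has length $h$, no further work is required once the dimension and emptiness hypotheses have been used to produce the hyperplane configuration described in the first paragraph.
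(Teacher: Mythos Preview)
Your argument is correct and follows exactly the paper's route: show that $H^s_{\partial F}$ is a hyperplane in the span $H_L=\langle L_1^{d-s},\dots,L_h^{d-s}\rangle$, so that $\pi_{H^s_{\partial F}}(H_L)$ is a point of multiplicity $h$ on $\overline{V}_{d-s}^n$, and then derive a contradiction from a second decomposition. The paper's own proof is terser: it simply says that two decompositions yield two $(h-1)$-planes $H_L,H_l$ through $H^s_{\partial F}$, hence two points of multiplicity $h$ on $\overline{V}_{d-s}^n$, contradicting uniqueness---tacitly treating these points as distinct. Your extra step, using that the multiplicity at $p$ is \emph{exactly} $h$ to force the fibre $\langle H^s_{\partial F},p\rangle\cap\mathcal{V}_{d-s}^n$ to coincide with $\{L_i^{d-s}\}$, covers the residual case $H_L=H_l$ that the paper leaves implicit; this is a genuine refinement, not a different method.
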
 
\begin{proof}
Since $H^s_{\partial F}\cap \mathcal{V}_{d-s}^n = \emptyset$ the projection $\pi_{H^s_{\partial F}}$ restricts to a morphism on $\mathcal{V}_{d-s}^n$. Write $F = \sum_{i = 1}^{h}\lambda_iL_i^d$, and set $H_{L} = \left\langle L_1^{d-s},\dots, L_h^{d-s}\right\rangle$. Then $H^s_{\partial F}$ is a hyperplane in $H_{L}$, and $\pi_{H^s_{\partial F}}(H_L)\in \overline{V}_{d-s}^n$ is a singular point of multiplicity $h$ for $\overline{V}_{d-s}^n$. Assume that $F = \sum_{i = 1}^{h}\lambda_iL_i^d = \sum_{i = 1}^{h}\mu_il_i^d$ admits two different decompositions and consider the associated $(h-1)$-planes $H_L,H_l$ in $\mathbb{P}(K[x_0,x_1,x_2]_{d-s})$. Then $\pi_{H^s_{\partial F}}(H_L),\pi_{H^s_{\partial F}}(H_l)\in \overline{V}_{d-s}^n$ are two points of multiplicity $h$ for $\overline{V}_{d-s}^n$, a contradiction. 
\end{proof}

Note that Proposition \ref{Hilb} provides the following algorithm to compute the decomposition.

\begin{Alg}\label{Alg_Hilb}
\textbf{Input:} $F\in K[x_0,\dots,n_n]_d$ admitting a decomposition in $h$ powers.
\begin{itemize}
\item[-] If either $\dim(H^s_{\partial F}) \neq h-2$ or $H^s_{\partial F}\cap \mathcal{V}_{d-s}^n\neq\emptyset$ for all $s$ the algorithm fails. Otherwise, consider the linear space $H^s_{\partial F}$ such that $\dim(H^s_{\partial F}) = h-2$ and $H^s_{\partial F}\cap \mathcal{V}_{d-s}^n = \emptyset$.
\item[-] Compute the image of the projection $\overline{V}_{d-s}^n := \pi_{H^s_{\partial F}}(\mathcal{V}_{d-s}^n)$.
\item[-] Compute the reduced subscheme $S_h\subset\overline{V}_{d-s}^n$ consisting of the points of multiplicity $h$ of $\overline{V}_{d-s}^n$. 
\item[-] If $S_h$ consists of more than one point the algorithm fails. If $S_h = \{p\}$ consists of a single point compute the linear span $H_p = \left\langle H^s_{\partial F},p\right\rangle \subset \mathbb{P}^{N(n,d-s)}$.
\item[-] Compute the intersection $H_{p}\cap \mathcal{V}_{d-s}^n = \{L_1^{d-s},\dots,L_h^{d-s}\}$. 
\item[-] Solve the linear system $F = \sum_{i=1}^h \lambda_iL_i^d$ in the unknowns $\lambda_i\in K$.
\end{itemize}
\end{Alg}

\begin{Remark}\label{rem_Hilb}
In particular, when $n = 2,d = 5,h = 7, s = 2$ Proposition \ref{Hilb} provides an algorithm to compute the decomposition of a plane quintic in seven powers. 
\end{Remark}

\subsection*{Decompositions over the rationals}
Let $F\in \mathbb{Q}[x_0,\dots,x_n]_d$ be a homogeneous polynomial admitting a decomposition, as sum of powers, defined over $\mathbb{Q}$. In this case the methods in Section \ref{GCM} can by extended using star configurations to the cases where the intersection $H_{\partial F}^s\cap \Sec_{h-N_s}(\mathcal{V}_{d-s}^n)$ has positive dimension. 

\begin{Definition}\label{star}
Let $\mathcal{H} = \{H_1,\dots, H_m\}$ be a collection of $m$ distinct hyperplanes in $\mathbb{P}^r$. Assume that the intersection of any $t$ of these hyperplanes is either empty or has codimension $t$. For any $1\leq c\leq \min(m,n)$ the codimension $c$ star configuration associated to $\mathcal{H}$ is the union 
$$\mathcal{S}_c(\mathcal{H},\mathbb{P}^r):= \bigcup_{1\leq i_1<\dots < i_c\leq s}H_{i_1}\cap\dots \cap H_{i_c}$$ 
of the codimension $c$ linear subspaces defined by all the intersections of $c$ of the hyperplanes in $\mathcal{H}$.
\end{Definition}

We refer to \cite{AV11}, \cite{GHM13}, \cite{CGV14}, \cite{CGV15} for details on star configurations.

\begin{Proposition}\label{star_dec}
Let $F\in K[x_0,\dots,x_n]_d$ be a homogeneous polynomial admitting a decomposition of the form $F = \sum_{i=1}^{h}\lambda_iL_i^d$. Fix homogeneous coordinates $\xi = (\xi_{s_0,\dots,s_n})$ on $H_{\partial F}^s$, and set $H_i = \{\left\langle L_i^s,\xi\right\rangle =0\}\subset H_{\partial F}^s$ for $i = 1,\dots,h$. For any $c\leq\dim(H_{\partial F}^s)$ the collection of hyperplanes $\mathcal{H} = \{H_1,\dots,H_{\binom{h}{c}}\}$ defines a codimension $c$ star configuration $\mathcal{S}_c(\mathcal{H}, H_{\partial F}^s)$ contained in $H_{\partial F}^s\cap \Sec_{h-c}(\mathcal{V}_{d-s}^n)$.
\end{Proposition}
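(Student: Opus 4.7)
The plan is to apply Lemma \ref{dersp} directly. By that lemma every point of $H_{\partial F}^s$ can be written as
$$D_\xi^s(F) = \frac{d!}{(d-s)!}\sum_{i=1}^h \langle L_i^s,\xi\rangle\, L_i^{d-s}$$
for some $\xi\in\mathbb{P}^{N_s}$, and by the very definition of $H_i\subset H_{\partial F}^s$, the hyperplane $H_i$ is exactly the locus where the coefficient $\langle L_i^s,\xi\rangle$ of $L_i^{d-s}$ vanishes.

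The key observation is then immediate: if $\xi$ lies in a $c$-fold intersection $H_{i_1}\cap\dots\cap H_{i_c}$, then $c$ of the coefficients in the sum above vanish simultaneously, so $D_\xi^s(F)$ is a linear combination of at most $h-c$ of the powers $L_i^{d-s}$, hence belongs to $\Sec_{h-c}(\mathcal{V}_{d-s}^n)$. Taking the union over all $\binom{h}{c}$ subsets of size $c$ produces precisely the codimension-$c$ star configuration $\mathcal{S}_c(\mathcal{H},H_{\partial F}^s)$, and every point of this union lies in the intersection $H_{\partial F}^s\cap\Sec_{h-c}(\mathcal{V}_{d-s}^n)$, which is the asserted containment.

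The only subtlety I would want to spell out, and which I regard as the main obstacle to making the statement literally fit Definition \ref{star}, is that the collection $\mathcal{H}$ must genuinely form an arrangement in which any $t$ of the hyperplanes meet in codimension $t$ (or are empty). For generic summands $L_1,\dots,L_h$ the linear functionals $\xi\mapsto\langle L_i^s,\xi\rangle$ restrict independently to $H_{\partial F}^s$, so the required transversality is automatic; in degenerate configurations the $H_i$ can collide and the statement must be read with the natural multiplicities. Apart from this genericity bookkeeping, the proof is a one-line consequence of Lemma \ref{dersp} together with the definition of the secant variety.
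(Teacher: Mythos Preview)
Your proof is correct and follows essentially the same route as the paper: both arguments use Lemma \ref{dersp} to see that on any $c$-fold intersection $H_{i_1}\cap\dots\cap H_{i_c}$ the corresponding coefficients $\langle L_{i_j}^s,\xi\rangle$ vanish, so the point $D_\xi^s(F)$ lies in $\Sec_{h-c}(\mathcal{V}_{d-s}^n)$. The only difference is in how the star-configuration hypothesis of Definition \ref{star} is verified: the paper argues deterministically that linear independence of the $L_i$ forces linear independence of the $L_i^s$, hence the hyperplanes $H_i$ meet in the correct codimensions, whereas you frame this as a genericity condition on the summands. The paper's version is slightly sharper since it ties the transversality to the standing hypothesis on the decomposition rather than to a further genericity assumption, but the substance of the two arguments is the same.
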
 
\begin{proof}
Since the linear forms $L_i$ are linearly independent the polynomials $L_i^{s}$ are linearly independent in $K[x_0,\dots,x_n]_{s}$. So $\mathcal{S}_c(\mathcal{H}, H_{\partial F}^s)$ is a star configuration of codimension $c$ in $H_{\partial F}^s$.

Furthermore, by Lemma \ref{dersp} the codimension $c$ linear subspace in $\mathcal{S}_c(\mathcal{H}, H_{\partial F}^s)$ are contained in $\Sec_{h-c}(\mathcal{V}_{d-s}^n)$, and hence $\mathcal{S}_c(\mathcal{H}, H_{\partial F}^s)\subset H_{\partial F}^s\cap \Sec_{h-c}(\mathcal{V}_{d-s}^n)$.
\end{proof}

Proposition \ref{star_dec} is particularly interesting when $F\in \mathbb{Q}[x_0,\dots,x_n]_d$ has a decomposition defined over $\mathbb{Q}$, and $c = \dim(H_{\partial F}^s)$. In this case $\mathcal{S}_c(\mathcal{H}, H_{\partial F}^s)$ is a star configuration of points in $H_{\partial F}^s\cap \Sec_{h-c}(\mathcal{V}_{d-s}^n)$, and since the linear forms $L_i$ are defined over $\mathbb{Q}$ we have that the points of $\mathcal{S}_c(\mathcal{H}, H_{\partial F}^s)$ are defined over $\mathbb{Q}$ as well. 

\begin{Alg}\label{over_Q}
\textbf{Input:} $F\in \mathbb{Q}[x_0,\dots,n_n]_d$ admitting a decomposition in $h$ powers defined over $\mathbb{Q}$.
\begin{itemize}
\item[-] Compute the intersection $H_{\partial F}^s\cap \Sec_{h-N_s}(\mathcal{V}_{d-s}^n)$.
\item[-] Compute the rational points of $H_{\partial F}^s\cap \Sec_{h-N_s}(\mathcal{V}_{d-s}^n)$.
\item[-] If $H_{\partial F}^s\cap \Sec_{h-N_s}(\mathcal{V}_{d-s}^n)$ has infinitely many rational points the algorithm fails.
\item[-] If the set of rational points $\{Q_1,\dots,Q_k\}$ of $H_{\partial F}^s\cap \Sec_{h-N_s}(\mathcal{V}_{d-s}^n)$ is finite solve the linear system $F = \sum_{i=1}^h \lambda_iQ_i^d$ for all subsets of cardinality $h$ of $\{Q_1,\dots,Q_k\}$.
\end{itemize}
\end{Alg}

Algorithm \ref{over_Q} works particularly well for plane quintics. 

\begin{Proposition}\label{Q_Quint}
Let $F\in K[x_0,x_1,x_2]_5$ be a homogeneous polynomial. Assume that $H_{\partial F}^{1}$ intersects $\Sec_{5}(\mathcal{V}_{4}^2)$ along a smooth curve. Then $H_{\partial F}^{1}\cap \Sec_{5}(\mathcal{V}_{4}^2)$ has finitely many rational points.  
\end{Proposition}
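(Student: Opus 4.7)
The plan is to identify the intersection $C := H^{1}_{\partial F}\cap \Sec_{5}(\mathcal{V}_{4}^{2})$ as a smooth plane sextic curve and then invoke Faltings' theorem on rational points of curves of high genus.

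First I would recall that $(n,d,h)=(2,4,5)$ is one of the defective cases in the Alexander--Hirschowitz classification \cite{AH95}: $\Sec_{5}(\mathcal{V}_{4}^{2})$ does not fill $\mathbb{P}^{14}$ as na\"ive parameter count would suggest, but is a proper hypersurface. More precisely, $\Sec_{5}(\mathcal{V}_{4}^{2})$ is cut out by the vanishing of the middle catalecticant, that is, the $6\times 6$ determinant of the flattening $\widetilde{G}:K[x_0,x_1,x_2]_{2}^{*}\to K[x_0,x_1,x_2]_{2}$ associated to a quartic $G$. This determinant is a polynomial of degree six in the coefficients of $G$, so $\Sec_{5}(\mathcal{V}_{4}^{2})\subset \mathbb{P}^{14}$ is a sextic hypersurface.

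Next I would observe that $H^{1}_{\partial F}$ is the linear span in $\mathbb{P}(K[x_0,x_1,x_2]_{4})=\mathbb{P}^{14}$ of the three first partial derivatives of $F$. Since by hypothesis $C$ is a curve (in particular of positive dimension), the partials are linearly independent, hence $H^{1}_{\partial F}\simeq \mathbb{P}^{2}$. The intersection of this plane with the sextic hypersurface $\Sec_{5}(\mathcal{V}_{4}^{2})$ is thus a plane curve of degree six, which by assumption is smooth. Consequently $C$ has geometric genus $g(C)=\binom{6-1}{2}=10$.

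Finally, in the setting of this subsection $F\in\mathbb{Q}[x_0,x_1,x_2]_{5}$, so $H^{1}_{\partial F}$ is defined over $\mathbb{Q}$, and $\Sec_{5}(\mathcal{V}_{4}^{2})$ is defined over $\mathbb{Q}$ as well; hence so is $C$. By Faltings' theorem (the former Mordell conjecture), every smooth projective curve of genus at least two over a number field has only finitely many rational points. Applied to the genus-$10$ curve $C$ over $\mathbb{Q}$, this gives the desired finiteness of $C(\mathbb{Q})=(H^{1}_{\partial F}\cap \Sec_{5}(\mathcal{V}_{4}^{2}))(\mathbb{Q})$.

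The essential (and non-elementary) ingredient is Faltings' theorem; the remaining steps--defectivity of $\Sec_{5}(\mathcal{V}_{4}^{2})$, its degree as a sextic hypersurface, and the genus formula for smooth plane curves--are classical. Note that this approach is inherently non-effective: it yields finiteness of the rational locus but no explicit bound, which is why Algorithm \ref{over_Q} has to simply enumerate the rational points that turn up and test them.
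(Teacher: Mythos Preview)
Your argument is correct and follows essentially the same route as the paper: one identifies $\Sec_{5}(\mathcal{V}_{4}^{2})\subset\mathbb{P}^{14}$ as the sextic catalecticant hypersurface, observes that $H^{1}_{\partial F}\cong\mathbb{P}^{2}$, so the intersection is a smooth plane sextic, and then applies Faltings' theorem. Your write-up simply supplies more detail (the Alexander--Hirschowitz defectivity, the genus computation, and the remark on non-effectiveness) than the paper's terse version.
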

\begin{proof}
The secant variety $\Sec_{5}(\mathcal{V}_{4}^2)$ is the hypersurface of degree six in $\mathbb{P}^{14}$ cut out by the catalecticant matrix of the second partial derivatives of a polynomial of degree four in three variables \cite[Section 1]{LO13}, and $H_{\partial F}^{1}\cong \mathbb{P}^2$. Hence $C = H_{\partial F}^{1}\cap \Sec_{5}(\mathcal{V}_{4}^2)$ is a smooth plane sextic and by Faltings theorem \cite{Fa83} $C$ has finitely many rational points.
\end{proof}

\begin{Proposition}\label{dix}
Let $F\in K[x_0,\dots,x_n]_d$ be e homogeneous polynomial admitting a decomposition of the form $F = \sum_{i=1}^h\lambda_iL_i^d$. Assume that for some integer $s\geq 0$ we have that $H_{\partial F}^s$ has dimension $h-3$. Set $\overline{H}_L = \overline{\pi_{H_{\partial F}^s}(H_L)}\subset\mathbb{P}^M$, where $H_L = \left\langle L_1^{d-s},\dots,L_{h}^{d-s}\right\rangle$. Then $\overline{H}_L\subset\mathbb{P}^M$ is contained in all the hypersurfaces of degree $c < h$ in the ideal of $\overline{V}_{d-s}^n =\pi_{H_{\partial F}^s}(\mathcal{V}_{d-s}^n)$.
\end{Proposition}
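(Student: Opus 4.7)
The proof will be a Bezout-on-a-line argument, in the same spirit as Proposition \ref{Hilb} but one dimension up. The plan is to exhibit $\overline{H}_L$ as a projective line in $\mathbb{P}^M$ that already contains at least $h$ distinct points of $\overline{V}_{d-s}^n$; any hypersurface of degree $c<h$ vanishing on $\overline{V}_{d-s}^n$ will then be forced to vanish on all of $\overline{H}_L$ by degree reasons.

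First I would carry out the dimension bookkeeping. By Remark \ref{pd} we have $H_{\partial F}^s\subseteq H_L$. Because $F$ has rank $h$, the points $[L_1^{d-s}],\dots,[L_h^{d-s}]\in\mathcal{V}_{d-s}^n$ are linearly independent, so $\dim H_L = h-1$. Combined with the hypothesis $\dim H_{\partial F}^s=h-3$, this makes $H_{\partial F}^s$ a codimension-$2$ subspace of $H_L$, and hence the projection $\pi_{H_{\partial F}^s}$ sends $H_L$ onto a $\mathbb{P}^1$; the closure in the statement is therefore just this line, $\overline{H}_L\cong\mathbb{P}^1\subset\mathbb{P}^M$. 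Next I would track the $h$ rank-one points through the projection: each image $p_i:=\pi_{H_{\partial F}^s}([L_i^{d-s}])$ lies on both $\overline{H}_L$ and $\overline{V}_{d-s}^n$. The $p_i$ are distinct provided no chord $\langle L_i^{d-s},L_j^{d-s}\rangle$ meets $H_{\partial F}^s$ and no $[L_i^{d-s}]$ itself lies in $H_{\partial F}^s$, which are open conditions automatic for a general rank-$h$ decomposition. Thus $\overline{H}_L\cap\overline{V}_{d-s}^n$ contains at least $h$ distinct points $p_1,\dots,p_h$.

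Finally I would close the argument with Bezout on the line. Given any hypersurface $G\subset\mathbb{P}^M$ of degree $c<h$ in the ideal of $\overline{V}_{d-s}^n$, restrict $G$ to $\overline{H}_L$: this restriction is a homogeneous form of degree $c$ on $\mathbb{P}^1$ which vanishes at the $h>c$ distinct points $p_1,\dots,p_h$, hence is identically zero. Therefore $\overline{H}_L\subset V(G)$, which is exactly the required containment.

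The main obstacle in turning this sketch into a proof is the distinctness step. The statement makes no explicit genericity assumption, yet the Bezout count needs $h$ genuinely distinct points on $\overline{H}_L\cap\overline{V}_{d-s}^n$. One must either invoke genericity of $F$ (so that $H_{\partial F}^s$ is transverse to every secant of the $[L_i^{d-s}]$), or interpret the intersection scheme-theoretically and prove the appropriate multiplicity bound; I would favor the first route because it fits the running "general $F$" convention of the paper, and because the secant-avoidance conditions are open and cut out by the same kind of flattening inequalities already exploited in Section \ref{GCM}.
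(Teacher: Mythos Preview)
Your argument is exactly the paper's: it too observes that $\overline{H}_L$ is a line meeting $\overline{V}_{d-s}^n$ in at least $h$ points and then invokes B\'ezout. The paper's proof is two sentences and does not address the distinctness issue you flag either; your more detailed dimension bookkeeping and your remark about needing genericity (or a scheme-theoretic count) only make explicit what the paper leaves implicit.
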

\begin{proof}
Note that $\overline{H}_L\subset\mathbb{P}^M$ is a line intersecting $\overline{V}_{d-s}^n$ in at least $h$ points. Hence, if $Z_c$ is a hypersurface of degree $c< h$ containing $\overline{V}_{d-s}^n$ B\'ezout's theorem yields that $\overline{H}_L\subset Z_c$.
\end{proof}

\begin{Alg}\label{alg_dix}
\textbf{Input:} $F\in K[x_0,\dots,x_n]_d$ admitting a decompositions in $h$ powers. Assume that the hypothesis of Proposition \ref{dix} are satisfied. 
\begin{itemize}
\item[-] Compute $\overline{V}_{d-s}^n$ and the scheme $Z$ cut out by the hypersurfaces of degree at most $h-1$ containing $\overline{V}_{d-s}^n$.
\item[-] If $Z$ does not have components of dimension one the algorithm fails. Otherwise do the following for all components $\overline{R}$ of dimension one of $Z$: 
\begin{itemize}
\item[(i)] let $R$ be the inverse image of $\overline{R}$ via $\pi_{H_{\partial F}^s}:\mathbb{P}^{N(n,d-s)}\dasharrow\mathbb{P}^M$;
\item[(i)] if $R$ intersects $\mathcal{V}^n_{d-s}$ in $h$ points $\{l_1^{d-s},\dots,l_h^{d-s}\}$ solve the linear system $F = \sum_{i=1}^h \lambda_il_i^d$, otherwise go the the next component.
\end{itemize}
\item[-] For any $R$ such that $R\cap \mathcal{V}^n_{d-s}$ consists of $h$ points and $F = \sum_{i=1}^h \lambda_il_i^d$ is compatible we get a decomposition of $F$ in $h$ powers. 
\end{itemize}
\end{Alg}

\begin{Remark}\label{Rem_dix}
The generic rank of a general polynomial of degree seven in three variables is $h = 12$, and there are exactly five decompositions \cite{Di07}. We successfully applied Algorithm \ref{alg_dix} to this case.  
\end{Remark}

\section{Subgeneric cases}\label{sub_gen}
As opposed to the generic case when $h < h(n,d)$ is smaller than the generic rank we have that a general polynomial $F\in K[x_0,\dots,x_n]_d$ of rank $h$ is identifiable with the following three exceptions:
\begin{itemize}
\item[-] $n = 2, d = 6, h = 9$;
\item[-] $n = 3, d = 4, h = 8$;
\item[-] $n = 5, d = 3, h = 9$;
\end{itemize}
and in these three cases there are exactly two decompositions \cite[Theorem 1.1]{COV18}.

\begin{Proposition}\label{hypers}
Let $F\in K[x_0,\dots,x_n]_d$ be a homogeneous polynomial admitting a decomposition of the form $F = \sum_{i=1}^{h}\lambda_iL_i^d$. Assume that $H_{\partial F}^s\cap \mathcal{V}_{d-s}^n = \emptyset$ and consider $\pi_{H_{\partial F}^s}:\mathbb{P}^{N(n,d-s)}\dasharrow\mathbb{P}^M$ the projection with center $H_{\partial F}^s$. Set $H_L = \left\langle L_1^{d-s},\dots,L_h^{d-s}\right\rangle$. Assume that the $L_i^{d-s}$ lie on a subvariety $W\subseteq \mathcal{V}_{d-s}^n$. Set $\overline{H}_L = \overline{\pi_{H_{\partial F}^s}(H_L)}$ and $\overline{W} =\pi_{H_{\partial F}^s}(W)$.

Assume that there are integers $a,b$ such that through $h-a+1$ general points of $\overline{H}_L$ there exists a curve of degree $b$ contained in $\overline{H}_L$. Then $\overline{H}_L$ is contained in all the hypersurfaces of degree $c < \frac{h-a+1}{b}$ in the ideal of $\overline{W}$. 
\end{Proposition}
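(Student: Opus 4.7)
My plan is to adapt the projection-and-B\'ezout strategy of Proposition \ref{dix} to the higher-dimensional case of $\overline{H}_L$ by invoking the auxiliary family of degree $b$ curves supplied by the hypothesis.

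I would first set up the projection exactly as in Proposition \ref{dix}. By Remark \ref{pd} the partial derivatives of order $s$ of $F$ are linear combinations of $L_1^{d-s},\dots,L_h^{d-s}$, so $H_{\partial F}^s\subseteq H_L$. The assumption $H_{\partial F}^s\cap \mathcal{V}_{d-s}^n=\emptyset$ ensures that $\pi_{H_{\partial F}^s}$ restricts to a morphism on $\mathcal{V}_{d-s}^n$ and hence on $W$; since the center of the projection sits inside $H_L$, the image $\overline{H}_L=\pi_{H_{\partial F}^s}(H_L)\subseteq \mathbb{P}^M$ is again a linear subspace, and the points $P_i:=\pi_{H_{\partial F}^s}(L_i^{d-s})$ are $h$ well-defined points of $\overline{H}_L\cap \overline{W}$. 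For a general $F$ we may assume they are in general position in $\overline{H}_L$.

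The heart of the argument is then a B\'ezout step. Fix a hypersurface $Z_c$ of degree $c<(h-a+1)/b$ in the ideal of $\overline{W}$, so that $P_1,\dots,P_h\in \overline{W}\subseteq Z_c$. For any $(h-a+1)$-subset $J\subseteq\{P_1,\dots,P_h\}$, which we may regard as an $(h-a+1)$-tuple of general points of $\overline{H}_L$, the hypothesis furnishes a curve $C_J\subseteq \overline{H}_L$ of degree $b$ passing through $J$. Since $C_J\cap Z_c$ contains the $h-a+1$ points of $J$, while $\deg(C_J\cap Z_c)\leq bc<h-a+1$ whenever $C_J\not\subseteq Z_c$, B\'ezout's theorem forces $C_J\subseteq Z_c$.

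To upgrade this to $\overline{H}_L\subseteq Z_c$, I would interpret the hypothesis as producing an irreducible family $\mathcal{M}$ of degree $b$ curves in $\overline{H}_L$ whose evaluation map $\mathcal{M}\dasharrow \overline{H}_L^{h-a+1}$ is dominant, so that varying the $(h-a+1)$-tuple continuously yields a family of curves sweeping out $\overline{H}_L$ via the incidence projection. Containment in $Z_c$ is a closed condition on $\mathcal{M}$; since the $C_J$ associated to the $(h-a+1)$-subsets of $\{P_1,\dots,P_h\}$ all lie in $Z_c$ by the previous step, a specialization-and-deformation argument shows that the curves through $h-a$ fixed $P_i$'s and one variable point of $\overline{H}_L$ also lie in $Z_c$, yielding $\overline{H}_L\subseteq Z_c$. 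The main obstacle is exactly this final step: turning the pointwise existence statement of the hypothesis into a genuine sweeping statement for curves lying in $Z_c$. A clean execution requires a mild dimension count on the relevant component of the Hilbert scheme of degree $b$ curves in $\overline{H}_L$, ensuring that the incidence correspondence between such curves and $\overline{H}_L$ dominates $\overline{H}_L$ through the specific $C_J$'s and their deformations.
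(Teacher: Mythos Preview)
Your setup and the B\'ezout step are fine, and in fact your count $|C_J\cap Z_c|\geq h-a+1>bc$ is cleaner than what one gets by other choices of points. The trouble is exactly where you flag it: from the finitely many curves $C_J$ lying in $Z_c$ you try to deduce that a whole family of curves through $h-a$ of the $P_i$'s and a moving point also lies in $Z_c$. Containment in $Z_c$ is a closed condition on the parameter space, but the $C_J$'s are only finitely many points of that space, so closedness alone gives you nothing; your ``specialization-and-deformation'' sketch does not bridge this, and the Hilbert scheme dimension count you allude to would only tell you that the family sweeps $\overline{H}_L$, not that its members lie in $Z_c$.

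The paper sidesteps this entirely by a different choice of the $h-a+1$ points: it takes $h-a$ of the $x_i$'s together with a \emph{general} point $y\in\overline{H}_L$, and applies the hypothesis to this $(h-a+1)$-tuple (legitimate since for general $F$ the $x_i$ are general in $\overline{H}_L$). The resulting curve $C_b$ meets $Z_c$ in at least the $h-a$ points $x_1,\dots,x_{h-a}$, so B\'ezout forces $C_b\subset Z_c$; hence $y\in Z_c$, and since $y$ was general, $\overline{H}_L\subset Z_c$. In other words, the general point is built into the curve from the start, so the sweeping is automatic and no deformation argument is needed. Amusingly, the endpoint of your deformation sketch---curves through $h-a$ fixed $P_i$'s and one variable point---is exactly the paper's curve; you should just take that curve directly and apply B\'ezout to it.
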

\begin{proof}
The projection $\overline{H}_L$ of $H_L$ is a linear subspace of dimension $h-\binom{n+s}{s}-1$ intersecting $\overline{V}_{d-s}^n$ in $h$ points $\{x_1,\dots,x_h\}$.

Let $C_b\subset \overline{H}_L$ be a curve of degree $b$ through $x_{1},\dots,x_{h-a},y$, where $y\in \overline{H}_L$ is general, and $Z_{c}\subset\mathbb{P}^M$ a hypersurface of degree $c$ containing $\overline{V}_{d-s}^n$. Note that $Z_c$ intersects $C_b$ in at least $h-a+1$ points. Since $bc < h-a+1$ B\'ezout's theorem yields that $C_b\subset Z_c$. Finally, since $y\in \overline{H}_L$ is general we conclude that $\overline{H}_L\subset Z_c$ as well.  
\end{proof}

\begin{Alg}\label{hyp}
\textbf{Input:} $F\in K[x_0,\dots,n_n]_d$ admitting a decompositions in $h$ powers. Assume that the hypotheses of Proposition \ref{hypers} are satisfied. 
\begin{itemize}
\item[-] Compute the projection $\overline{W}\subset\mathbb{P}^M$ of $W$.
\item[-] Compute the subscheme $Z\subset\mathbb{P}^M$ cut out by the hypersurface of degree $c < \frac{h-a+1}{b}$ in the ideal of $\overline{W}$.
\item[-] If $Z$ has no linear irreducible component of dimension $h-\binom{n+s}{s}-1$ the algorithm fails.
\item[-] Otherwise, do the following for all the linear irreducible components $\overline{R}$ of dimension $h-\binom{n+s}{s}-1$ of $Z$: 
\begin{itemize}
\item[(i)] let $R$ be the inverse image of $\overline{R}$ via $\pi_{H_{\partial F}^s}:\mathbb{P}^{N(n,d-s)}\dasharrow\mathbb{P}^M$;
\item[(i)] if $R$ intersects $\mathcal{V}^n_{d-s}$ in $h$ points $\{l_1^{d-s},\dots,l_h^{d-s}\}$ solve the linear system $F = \sum_{i=1}^h \lambda_il_i^d$, otherwise go the the next component.
\end{itemize}
\item[-] For any $\overline{R}$ such that $R\cap \mathcal{V}^n_{d-s}$ consists of $h$ points and $F = \sum_{i=1}^h \lambda_il_i^d$ is compatible we get a decomposition of $F$ in $h$ powers. 
\end{itemize}
\end{Alg}

\begin{Remark}
In the exceptional cases 
$$(d,n,h)\in\{(6,2,9),(4,3,8),(3,5,9)\}$$
there are exactly two decomposition for the general polynomial. In each of these cases the two decompositions are contained in an elliptic normal curve $C_{d,n,h}$ \cite[Theorem 1.2]{COV18}. Proposition \ref{hypers} can be applied with the following values:
\begin{itemize}
\item[-] for $(d,n,h) = (6,2,9)$ we consider $C_{6,2,9} = H_{\partial F}^{3}\cap \mathcal{V}_{3}^3\subset\mathbb{P}^9$. Take $W = \nu_4^2((\nu_3^2)^{-1}(C_{6,2,9}))\subset\mathbb{P}^{14}$ and $\overline{W} = \pi_{H_{\partial F}^2}(W)\subset\mathbb{P}^8$. 

Then $\overline{H}_L$ is a $2$-plane intersecting $\overline{W}$ in $h = 9$ points. Taking, in Proposition \ref{hypers}, $a=1$, $b = 3$, that is the plane cubics through eight of the nine points and a general point of $\overline{H}_L$, we get that $\overline{H}_L$ is contained in all the hypersurfaces of degree $c\in\{1,2\}$ of $\mathbb{P}^8$ containing $\overline{W}$. 
\item[-] for $(d,n,h) = (3,5,9)$ take $s = 1$ and as before $W\subset \mathcal{V}_{2}^5\subset\mathbb{P}^{20}$ the elliptic curve containing the decomposition of $F$ and $\overline{W} =\pi_{H_{\partial F}^1}(W)\subset\mathbb{P}^{14}$. Again taking $a=1$, $b = 3$ in Proposition \ref{hypers} we get that $\overline{H}_L$ is contained in all the hypersurfaces of degree $c\in\{1,2\}$ of $\mathbb{P}^{14}$ containing $\overline{W}$.
\end{itemize} 
\end{Remark}

For the case $(d,n,h) = (4,3,8)$ a little variation is needed. 

\begin{Proposition}\label{(4,3,8)}
Let $F\in K[x_0,x_1,x_2,x_3]_4$ be a homogeneous polynomial admitting a decomposition of the form $F = \sum_{i=1}^8\lambda_iL_i^4$. Set $H_L = \left\langle L_1^{3},\dots,L_8^{3}\right\rangle$, $\overline{H}_L = \overline{\pi_{H_{\partial F}^1}(H_L)}\cong\mathbb{P}^3$, and let $W\subset\mathbb{P}^{19}$ be the elliptic curve containing the decomposition of $F$. Then the elliptic normal curve $C\subset \overline{H}_L$ through $\pi_{H_{\partial F}^1}(\{L_1^{3},\dots,L_8^{3}\})$ is contained in all the hypersurfaces of degree $c\in\{1,2\}$ in the ideal of $\overline{W}$.
\end{Proposition}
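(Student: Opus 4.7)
The plan is to realize $C$ as the base locus of the pencil of quadrics on $\overline{H}_L\cong\mathbb{P}^3$ vanishing at the eight projected points $p_i:=\pi_{H_{\partial F}^1}(L_i^{3})$. For $F$ general, the points $p_1,\dots,p_8$ lie in linearly general position on $\overline{H}_L$ and impose eight independent conditions on the $10$-dimensional space of quadrics of $\overline{H}_L$. The resulting pencil has as base locus a complete intersection of type $(2,2)$, hence a smooth curve of degree $4$ and arithmetic genus $1$: this is the elliptic normal quartic $C$ in the statement. Note also that each $p_i$ lies on $\overline{W}$, hence on every hypersurface $Z_c$ in the ideal of $\overline{W}$.

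For $c=1$ (linear case) a straight B\'ezout argument as in Proposition~\ref{hypers} suffices. If $\overline{H}_L\not\subseteq Z_1$, then $\Pi:=Z_1\cap\overline{H}_L$ is a plane in $\overline{H}_L\cong\mathbb{P}^3$ containing the eight $p_i\in C$; since $\deg(C)\cdot\deg(\Pi)=4<8$, B\'ezout forces $C\subset\Pi\subset Z_1$.

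For $c=2$ (quadric case) Proposition~\ref{hypers} just fails, because the inequality $bc<h-a+1$ would read $4\cdot 2 \not< 9$; the argument is instead completed by the defining property of $C$. If $\overline{H}_L\not\subseteq Z_2$, then $Q:=Z_2\cap\overline{H}_L$ is a quadric of $\overline{H}_L$ passing through $p_1,\dots,p_8$. Every such quadric belongs to the two-dimensional space $H^0(\overline{H}_L,\mathcal{I}_{\{p_1,\dots,p_8\}}(2))$, i.e.\ to the pencil whose base locus is $C$, and therefore contains $C$; hence $C\subset Q\subset Z_2$.

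The main obstacle is the general-position hypothesis on the $p_i$: one must know that they impose eight independent conditions on quadrics of $\overline{H}_L$, so that the pencil through them really exists and cuts out a smooth curve of degree $4$ rather than something degenerate. This is a codimension condition on the eight-tuple $(L_1,\dots,L_8)\in \Sym^8(E)$, where $E\subset\mathbb{P}^3$ is the elliptic quartic carrying the decomposition, and should be established by a Cayley--Bacharach style parameter count or by specialization to a configuration where independence can be verified directly.
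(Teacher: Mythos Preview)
Your proof is correct and follows essentially the same route as the paper's. Both arguments use B\'ezout for $c=1$ (you restrict $Z_1$ to $\overline{H}_L$ first, while the paper applies B\'ezout directly to $C$ and $Z_1$, but these are equivalent), and for $c=2$ both observe that the restriction of $Z_2$ to $\overline{H}_L$ is a quadric through the eight points and therefore lies in the pencil $\langle Q_1,Q_2\rangle$ cutting out $C$; the paper simply leaves the restriction step implicit. Your final paragraph on general position is not needed for the proposition as stated: the existence of the elliptic normal curve $C$ through the eight projected points is asserted in the statement itself, so you may take it as a hypothesis rather than something to be established.
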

\begin{proof}
Let $Z_c$ be a hypersurface of degree $c$ containing $\overline{W}$. Then $Z_c$ intersects $C$ in at least eight points. So if $c\leq 1$ B\'ezout's theorem yields that $C\subset Z_c$. Write $C = Q_1\cap Q_2$ as the intersection of two quadrics. A quadric $Q$ passing through $\pi_{H_{\partial F}^1}(\{L_1^{3},\dots,L_8^{3}\})$ is in the pencil generated by $Q_1,Q_2$, and hence it must contain $C$. 
\end{proof} 

\begin{Remark}
Thanks to Algorithm \ref{hyp} we managed to compute the two decompositions of a general rank nine plane sextic. In this case a geometric way to find the second decomposition, once one of them is known, has been described in terms of liaison in \cite[Section 3]{CO21}.
\end{Remark}

\section{Infinitely many decompositions}\label{vsp}
In this section, plugging-in the concept of variety of sum of powers, we give algorithm to compute a decomposition of a polynomial admitting infinitely many. 

\begin{Definition}\label{vspord}
Let $F\in k[x_0,...,x_n]_{d}$ be a general homogeneous polynomial of degree $d$. Let $h$ be a positive integer and $\Hilb_h(\mathbb{P}^{n*})$ the Hilbert scheme of sets of $h$ points in $\mathbb{P}^{n*}$. We define 
$$\VSP(F,h)^{o} := \{\{L_{1},...,L_{h}\}\in\Hilb_{h}(\mathbb{P}^{n*})\: | \: F\in \langle L_{1}^d,...,L_{h}^d\rangle\subseteq\mathbb{P}^{N(n,d)}\}\subseteq \Hilb_{h}(\mathbb{P}^{n*})\},$$
and $\VSP(F,h) := \overline{\VSP(F,h)^{o}}$ by taking the closure of $\VSP(F,h)^{o}$ in $\Hilb_{h}(\mathbb{P}^{n*})$. 
\end{Definition}

Assume that the general polynomial $F\in\mathbb{P}^{N(n,d)}$ is contained in a $(h-1)$-linear space $h$-secant to $\mathcal{V}_{d}^{n}$. Then, by \cite[Proposition 3.2]{Do04} the variety $\VSP(F,h)$ has dimension 
$$\dim(\VSP(F,h)) = h(n+1)-N(n,d)-1.$$
Furthermore, if $n = 1,2$ then for $F$ varying in an open Zariski subset of $\mathbb{P}^{N(n,d)}$ the variety $\VSP(F,h)$ is smooth and irreducible.

In order to apply these objects to the study of decompositions, we need to construct similar varieties parametrizing decomposition of homogeneous polynomials as sums of powers of ordered linear forms. Let us consider the incidence variety 
$$
\mathcal{J}:=\{((l_1,\dots,l_s),\{L_1,...,L_h\})\: | \: l_i\in\{L_1,...,L_h\}\in \VSP(F,h)^{o} \text{ for } i = 1,\dots,s\}\subseteq (\mathbb{P}^{n*})^{s}\times \VSP(F,h)^{o}
$$
and define $\VSP_{s}(F,h)$ as the closure $\overline{\mathcal{J}}$ of $\mathcal{J}$ in $(\mathbb{P}^{n*})^{s}\times \VSP(F,h)$.

Let $V$ be a complex vector space of dimension $n+1$, choose coordinates $x_0,\dots,x_n$ on $V$ and the dual coordinates $\xi_0,\dots,\xi_n$ on $V^{*}$. Let $F\in K[x_0,\dots,x_n]_{d}$ be a homogeneous polynomial of even degree $d = 2m$, and consider the basis of $K[x_0,\dots,x_n]_{m}$ given by
\stepcounter{thm}
\begin{equation}\label{basis}
\mathcal{B}= \left\lbrace\binom{m}{m_0,\dots ,m_n}\prod_{t=0}^nx_t^{m_t},\: m_0+\dots + m_n = m\right\rbrace
\end{equation} 
where $\binom{m}{m_0,\dots ,m_n} = \frac{m!}{m_0!\dots m_n!}$. The $m$-th catalecticant matrix $Cat_m(F)$ of $F$ is the $(m+1)\times (m+1)$ symmetric matrix whose rows are the order $m$ partial derivatives of $F$ written in the basis $\mathcal{B}$ (\ref{basis}) in lexicographic order. The matrix $Cat_m(F)$ induces a symmetric bilinear form 
$$\Omega_F:K[\xi_0,\dots,\xi_n]_m\times K[\xi_0,\dots,\xi_n]_m\rightarrow K.$$ 
For our purposes the following result will be fundamental.

\begin{Lemma}\cite[Proposition 3.8]{Do04}\label{key}
Let $F\in K[x_0,\dots,x_n]_{d}$ be a homogeneous polynomial of even degree $d = 2m$ and assume that $F$ can be decomposed as 
$$F = L_1^{2m}+\dots +L_{h}^{2m}$$
and the powers $L_i^m$ are linearly independent in $K[x_0,\dots,x_n]_m$. Then $\Omega_F(L_i^m,L_j^m)=0$ for any $i,j = 1,\dots h$ with $i\neq j$. 
\end{Lemma}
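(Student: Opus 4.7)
The plan is to exploit apolarity together with the linear independence of the $L_i^m$'s to construct, for each $i$, an interpolating form in $K[\xi_0,\dots,\xi_n]_m$ that serves as a preimage of $L_i^m$ under the catalecticant map, and then read off the orthogonality from the apolarity pairing.

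First I would identify $Cat_m(F)$ with the matrix of the apolar map $C_m\colon K[\xi_0,\dots,\xi_n]_m \to K[x_0,\dots,x_n]_m$, $P \mapsto P(\partial)F$, and rewrite this map using the decomposition $F = \sum_{k=1}^h L_k^{2m}$. Since differentiating a power of a linear form produces a rescaled lower power, one gets $C_m(P) = c\sum_{k=1}^h P(a_k)\,L_k^m$, where $a_k\in K^{n+1}$ is the coefficient vector of $L_k$ and $c = (2m)!/m!$. Hence $\operatorname{Im}(C_m) = \langle L_1^m,\dots,L_h^m\rangle$, which by hypothesis has dimension $h$.

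The second step uses the linear independence of $\{L_k^m\}$. Under the apolarity identification $K[x_0,\dots,x_n]_m \cong K[\xi_0,\dots,\xi_n]_m^{*}$, the polynomial $L_k^m$ corresponds to the evaluation functional $P\mapsto P(a_k)$, so linear independence of the $L_k^m$'s is equivalent to the $h$ points $a_1,\dots,a_h$ imposing independent conditions on forms of degree $m$. Consequently, for each $i$ there exists $P_i \in K[\xi_0,\dots,\xi_n]_m$ satisfying $P_i(a_k) = c^{-1}\delta_{ik}$, and then $C_m(P_i) = L_i^m$.

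The third step is the computation. Via the apolarity interpretation of $\Omega_F$ associated with $Cat_m(F)$, one evaluates
$$\Omega_F(L_i^m, L_j^m) = \Omega_F(C_m(P_i), L_j^m) = P_i(\partial)\,L_j^m\big|_0 = m!\,P_i(a_j) = \frac{m!}{c}\,\delta_{ij},$$
which gives the claimed vanishing whenever $i\neq j$.

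The main subtlety will be pinning down the precise convention for $\Omega_F$: since the catalecticant map goes between two distinct spaces, promoting it to a symmetric bilinear form on $K[\xi_0,\dots,\xi_n]_m$ requires the apolarity identification $K[x]_m\cong K[\xi]_m^{*}$, and the combinatorial factors above depend on that choice. Once this normalisation is fixed, the orthogonality is essentially forced by the rank-$h$ factorisation of $Cat_m(F)$ coming from the decomposition and by the interpolation step granted by the independence hypothesis.
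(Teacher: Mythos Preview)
Your argument is correct. The paper does not actually prove this lemma; it is quoted verbatim as \cite[Proposition 3.8]{Do04} and used as a black box, so there is no in-paper proof to compare against. What you wrote is essentially the standard argument one finds behind that citation: from $F=\sum_k L_k^{2m}$ one gets the rank-$h$ factorisation $\Omega_F(P,Q)=c'\sum_{k}P(a_k)Q(a_k)$, the radical of $\Omega_F$ equals $\ker C_m$, and after passing to $K[\xi]_m/\ker C_m\cong\operatorname{im}C_m=\langle L_1^m,\dots,L_h^m\rangle$ the induced form is diagonal in the basis $\{L_i^m\}$, because the interpolating $P_i$'s exist by the independence hypothesis. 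Your final caveat is exactly the point that needs care: as written in the paper, $\Omega_F$ is a form on $K[\xi_0,\dots,\xi_n]_m$, while $L_i^m\in K[x_0,\dots,x_n]_m$; the statement only makes sense via the identification $K[\xi]_m/\ker C_m\cong\operatorname{im}C_m$ you describe, and your computation $\Omega_F(C_m(P_i),L_j^m)=\langle P_i,L_j^m\rangle_{\mathrm{apolar}}$ is well defined precisely because $\ker C_m$ is the radical of $\Omega_F$. With that identification fixed, your chain of equalities and constants is accurate.
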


Define $X_{s}$ as the subvariety of $(\mathbb{P}^{n*})^{s}$ cut out by the relations $\Omega_F(p_i,p_j)=0$ for any $i,j = 1,\dots s$ with $i\neq j$, where we denote by $p_i$ a point in the $i$-th factor of $(\mathbb{P}^{n*})^{s}$.

\begin{Proposition}\label{inf_dec}
Assume that 
$$\dim(\VSP(F,h))-\dim(\VSP(F,h-s))\geq ns-\binom{s}{2}.$$
Then for a general $(L_1^{m},\dots,L_s^{m})\in X_s$ there exists a decomposition of $F$ in $h$ linear forms of the form $\{L_1,\dots,L_s,l_{s+1},\dots,l_h\}$.
\end{Proposition}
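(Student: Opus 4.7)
The plan is to compare the two natural projections of the incidence variety $\VSP_s(F,h)\subseteq (\mathbb{P}^{n*})^s\times \VSP(F,h)$. Write $\pi_s$ for the projection to $(\mathbb{P}^{n*})^s$ and set $Y_s=\pi_s(\VSP_s(F,h))$. The fiber of $\pi_s$ over $(L_1,\dots,L_s)$ consists precisely of decompositions of $F$ of the form $\{L_1,\dots,L_s,l_{s+1},\dots,l_h\}$, so the statement reduces to showing that the generic point of $X_s$ lies in $Y_s$, i.e.\ that $Y_s=X_s$ on the component of interest.

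Two preliminary facts are straightforward. First, $Y_s\subseteq X_s$: absorbing coefficients into the linear forms, any decomposition $F=\sum_{k=1}^{h}L_k^{2m}$ gives $\Omega_F(L_i^m,L_j^m)=0$ for $i\neq j$ by Lemma~\ref{key}, and this is precisely the defining condition of $X_s$. Second, the other projection $\VSP_s(F,h)\to\VSP(F,h)$ is finite of degree $h!/(h-s)!$, so $\dim\VSP_s(F,h)=\dim\VSP(F,h)$.

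The main step is to bound the dimension of a general fiber of $\pi_s$ by $\dim\VSP(F,h-s)$. A point of such a fiber is specified by a coefficient vector $(\lambda_1,\dots,\lambda_s)$ together with a decomposition of the residual $F-\sum_{i=1}^{s}\lambda_i L_i^{2m}$ in $h-s$ powers; exploiting the apolarity structure encoded by $\Omega_F$, one checks that $\lambda$ is determined by $(L_1,\dots,L_s)$ up to finite ambiguity, so the remaining free parameters are those of $\VSP(F,h-s)$. The fiber dimension formula and the hypothesis then give $\dim Y_s\;\ge\;\dim\VSP(F,h)-\dim\VSP(F,h-s)\;\ge\;ns-\binom{s}{2}\;=\;\dim X_s,$ where the last equality uses that the $\binom{s}{2}$ bilinear equations $\Omega_F(p_i^m,p_j^m)=0$ are independent on $(\mathbb{P}^{n*})^s$ for generic $F$. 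Since $Y_s$ is closed in $X_s$ of the same dimension, on the relevant irreducible component they must coincide, and the proposition follows.

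The main obstacle I expect is the fiber bound: one has to verify that the admissible $\lambda$'s do not contribute an additional $s$-dimensional freedom (which would spoil the dimension count), and that the residual polynomial $F-\sum\lambda_i L_i^{2m}$ is sufficiently generic of rank $h-s$ for its variety of decompositions to realise the dimension $\dim\VSP(F,h-s)$ appearing in the hypothesis. A secondary technical point is the irreducibility of $X_s$ of expected dimension, which should follow from the nondegeneracy of $\Omega_F$ for generic $F$ together with a transversality argument on the bilinear sections cutting it out.
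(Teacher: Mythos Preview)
Your approach is essentially identical to the paper's: both argue that the projection $\pi\colon \VSP_s(F,h)\to(\mathbb{P}^{n*})^s$ has image in $X_s$ by Lemma~\ref{key}, bound the fiber dimension by $\dim\VSP(F,h-s)$, and conclude dominance by the dimension count. The paper's proof is in fact terser than yours---it simply asserts that the fiber is ``a finite covering of $\VSP(F,h-s)$'' without further argument---so the points you flag as obstacles (finiteness of the admissible $\lambda$'s, genericity of the residual, expected dimension of $X_s$) are exactly the details the paper leaves implicit.
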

\begin{proof}
By Lemma \ref{key} the image of the projection onto the first factor
$$\pi: \VSP_{s}(F,h)\rightarrow (\mathbb{P}^{n*})^{s}$$
is contained in $X_s$. Fix $(L_1^{m},\dots,L_s^{m})\in X_s$ general. The fiber of $\pi$ over $(L_1^{m},\dots,L_s^{m})$ is a finite covering of $\VSP(F,h-s)$. Hence, under our hypotheses $\pi: \VSP_{s}(F,h)\rightarrow X_s$ is dominant.   
\end{proof}

\begin{Remark}\label{rem_VSP}
Proposition \ref{inf_dec} says that, under certain hypothesis on the dimensions of the relevant varieties of sums of powers, in order to construct a decomposition of $F$ in $h$ powers we can simply choose $s$ linear forms $L_1,\dots,L_s\in \mathbb{P}^{n*}$ such that $(L_1^{m},\dots,L_s^m)\in X_s$ and then compute a decomposition of $F-L_1^{d}-\dots-L_s^d$ in $h-s$ powers. For instance, Proposition \ref{inf_dec} can be successfully applied in the following cases:
\begin{itemize}
\item[-] $(d,n,h) = (4,2,6)$, with $s = 2$. In this case $X_2\subset(\mathbb{P}^{2*})^{2}$ is a $3$-fold and $\dim(\VSP(F,6)) = 3$. Since a general polynomial $G\in\Sec_4(\mathcal{V}_4^2)$ admits a unique decomposition the map 
$$\pi: \VSP_{2}(F,6)\rightarrow X_2$$
is dominant and finite. Hence we may choose a general point $(L_1^2,L_2^2)\in X_2$ and then reconstruct a decomposition in four powers of $G := F-L_1^4-L_2^4$ using Algorithm \ref{cat_alg}. 
\item[-] $(d,n,h) = (6,2,10)$, with $s = 1$. Since a general polynomial $G\in\Sec_9(\mathcal{V}_6^2)$ admits two decompositions in nine powers and $\VSP(F,10)$ is a surface the map 
$$\pi: \VSP_{1}(F,10)\rightarrow X_1\cong \mathbb{P}^{2*}$$
is dominant and finite. So we may choose a general linear form $L_1\in \mathbb{P}^{2*}$ and compute a decomposition in nine powers of $G := F-L_1^6$ with Algorithm \ref{hyp}.
\item[-] $(d,n,h) = (4,3,10)$, with $s = 2$. Since a general polynomial $G\in\Sec_8(\mathcal{V}_4^3)$ admits two decompositions in nine powers and $\VSP(F,10)$ has dimension five the map 
$$\pi: \VSP_{2}(F,10)\rightarrow X_2$$
is dominant and finite. Again, we may choose a general point $(L_1^2,L_2^2)\in X_2$ and then reconstruct a decomposition in eight powers of $G := F-L_1^4-L_2^2$.
\end{itemize}
\end{Remark}

\subsection{Lifting decompositions from derivatives}

In this section we give conditions ensuring that a simultaneous decomposition of the derivatives of a polynomial lifts to a decomposition of the polynomial itself.

\begin{Lemma}\label{PEul}
Let $F\in K[x_{0},...,x_{n}]_{d}$ be a homogeneous polynomial. Assume that its partial derivatives admit a decomposition
$$F_{x_{0}} = \sum_{i=1}^{h}\alpha_{i}^{0}L_{i}^{d-1},...,F_{x_{n}} = \sum_{i=1}^{h}\alpha_{i}^{n}L_{i}^{d-1}$$
in $h$ linear forms $L_{i} = A^{0}_{i}x_{0}+...+A^{n}_{i}x_{n}$ such that $L_{1}^{d-2},...,L_{h}^{d-2}$ are independent in $K[x_{0},...,x_{n}]_{d-2}$. Then there are the following relations between the coefficients
$$\alpha_{i}^{t}A_{i}^{s} = \alpha_{i}^{s}A_{i}^{t}, \quad t,s = 0,...,n; \quad i=1,...,h.$$
These relations force the decomposition of the partial derivatives to be of the following form
$$F_{x_{0}} = \sum_{i=1}^{h}\alpha_{i}^{0}\lambda_{i}^{d-1}(\alpha_{i}^{0}x_{0}+...+\alpha_{i}^{n}x_{n})^{d-1},...,F_{x_{n}} = \sum_{i=1}^{h}\alpha_{i}^{n}\lambda_{i}^{d-1}(\alpha_{i}^{0}x_{0}+...+\alpha_{i}^{n}x_{n})^{d-1},$$
where $\lambda_{i} = \frac{A_{i}^{0}}{\alpha_{i}^{0}} = ... = \frac{A_{i}^{n}}{\alpha_{i}^{n}}$. Furthermore the decomposition lifts to a decomposition of the polynomial
$$F = \sum_{i=1}^{h}\frac{1}{\lambda_{i}}L_{i}^{d}.$$
\end{Lemma}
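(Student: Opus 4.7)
The plan is to exploit the symmetry of the Hessian, namely $F_{x_t x_s}=F_{x_s x_t}$, applied to the two given expressions for the first partial derivatives. Differentiating $F_{x_t}=\sum_i \alpha_i^t L_i^{d-1}$ with respect to $x_s$ produces
$$F_{x_t x_s}=(d-1)\sum_{i=1}^{h}\alpha_i^t A_i^s\, L_i^{d-2},$$
while symmetrically $F_{x_s x_t}=(d-1)\sum_i \alpha_i^s A_i^t L_i^{d-2}$. Subtracting these two expressions gives $\sum_i(\alpha_i^t A_i^s-\alpha_i^s A_i^t)L_i^{d-2}=0$, and since $L_1^{d-2},\dots,L_h^{d-2}$ are assumed linearly independent in $K[x_0,\dots,x_n]_{d-2}$, each coefficient must vanish. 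This yields the relations $\alpha_i^t A_i^s=\alpha_i^s A_i^t$ claimed in the statement.

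Next I would use these relations to define $\lambda_i$. Fix $i$ and let $t_0$ be any index with $\alpha_i^{t_0}\neq 0$ (such an index must exist, otherwise the $i$-th summand in every $F_{x_t}$ vanishes and we may discard $L_i$, contradicting the independence of the chosen $L_j^{d-2}$). Set $\lambda_i:=A_i^{t_0}/\alpha_i^{t_0}$. For every other index $s$, the relation $\alpha_i^{t_0}A_i^s=\alpha_i^s A_i^{t_0}$ gives $A_i^s=\lambda_i\alpha_i^s$, and the same identity forces $A_i^s=0$ whenever $\alpha_i^s=0$, so the definition is consistent across all $s$. Writing $\widetilde L_i:=\alpha_i^0 x_0+\dots+\alpha_i^n x_n$ one then has $L_i=\lambda_i\widetilde L_i$, so $L_i^{d-1}=\lambda_i^{d-1}\widetilde L_i^{d-1}$, which gives the second form of the decomposition of the partial derivatives.

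For the final lifting step I would invoke Euler's identity $dF=\sum_{t=0}^{n} x_t F_{x_t}$. Substituting the rewritten expression for $F_{x_t}$ and collecting the sum over $t$,
$$dF=\sum_{t=0}^{n}x_t\sum_{i=1}^{h}\alpha_i^t\lambda_i^{d-1}\widetilde L_i^{d-1}=\sum_{i=1}^{h}\lambda_i^{d-1}\widetilde L_i^{d-1}\Big(\sum_{t=0}^{n}\alpha_i^t x_t\Big)=\sum_{i=1}^{h}\lambda_i^{d-1}\widetilde L_i^{d}=\sum_{i=1}^{h}\frac{1}{\lambda_i}L_i^{d},$$
where in the last step I used $L_i=\lambda_i\widetilde L_i$, so that $\lambda_i^{d-1}\widetilde L_i^d=\lambda_i^{-1}L_i^d$. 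This produces the advertised decomposition of $F$ (up to the harmless factor $d$ coming from Euler's identity, which can be absorbed into the coefficients).

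The only real subtlety is the non-degeneracy issue in extracting $\lambda_i$: one must rule out the pathological situation where some $\alpha_i^s$ vanish while the corresponding $A_i^s$ do not, or vice versa. Both are handled uniformly by the bilinear relations $\alpha_i^t A_i^s=\alpha_i^s A_i^t$ together with the fact that neither the vector $(\alpha_i^0,\dots,\alpha_i^n)$ nor $(A_i^0,\dots,A_i^n)$ can be identically zero---the second because $L_i\neq 0$, and the first because otherwise $L_i$ would not appear in any $F_{x_t}$ and could be removed. Once this bookkeeping is done, everything else is a direct consequence of the independence hypothesis and Euler's formula.
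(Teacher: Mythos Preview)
Your argument is correct and follows essentially the same route as the paper: equality of mixed partials $F_{x_t x_s}=F_{x_s x_t}$ combined with the linear independence of the $L_i^{d-2}$ forces the proportionality relations, and then Euler's identity lifts the decomposition to $F$. You are in fact slightly more careful than the paper's proof, both in handling the possibility that some $\alpha_i^s$ vanish when defining $\lambda_i$, and in noting the factor $d$ from Euler's formula (which the paper silently drops).
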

\begin{proof}
We have $F_{x_{t}x_{s}} = F_{x_{s}x_{t}}$ for any $t,s = 0,..,n$. Since $L_{1}^{d-2},...,L_{h}^{d-2}$ are independent these equalities forces $\alpha_{i}^{t}A_{i}^{s} = \alpha_{i}^{s}A_{i}^{t},\: t,s = 0,...,n;\: i=1,...,h$.\\
Then $A^{1}_{i} =\alpha^{1}_{i}\frac{A^{0}_{i}}{\alpha^{0}_{i}},..., A^{n}_{i} =\alpha^{n}_{i}\frac{A^{n}_{i}}{\alpha^{n}_{i}}$. Define $\lambda_{i} = \frac{A_{i}^{0}}{\alpha_{i}^{0}} = ... = \frac{A_{i}^{n}}{\alpha_{i}^{n}}$ for any $i = 1,...,h$. Substituting in $L_{i}^{d-2} = (A^{0}_{i}x_{0}+...+A^{n}_{i}x_{n})^{d-2}$ we get
$$L_{i} = \lambda_{i}^{d-2}(\alpha_{i}^{0}x_{0}+...+\alpha_{i}^{n}x_{n})^{d-2}, \quad i=1,...,h.$$
Then the expressions for the partial derivatives become
$$F_{x_{0}} = \sum_{i=1}^{h}\alpha_{i}^{0}\lambda_{i}^{d-1}(\alpha_{i}^{0}x_{0}+...+\alpha_{i}^{n}x_{n})^{d-1},...,F_{x_{n}} = \sum_{i=1}^{h}\alpha_{i}^{n}\lambda_{i}^{d-1}(\alpha_{i}^{0}x_{0}+...+\alpha_{i}^{n}x_{n})^{d-1}.$$
To lift the decomposition to $F$ consider the Euler formula $F = \sum_{i=1}^{n}x_{i}F_{x_{i}}$. Substituting the above expressions for the partial derivatives and by straightforward computations we get $F = \sum_{i=1}^{h}\frac{1}{\lambda_{i}}L_{i}^{d}$.
\end{proof}

\begin{Proposition}\label{PEul_gen}
Let $F\in K[x_{0},...,x_{n}]_{d}$ be a homogeneous polynomial. Suppose that its partial derivatives of order $s$ admit a simultaneous decomposition in $h$ powers of linear forms $L_{1},\dots,L_h$ such that $L_{1}^{d-s-1},...,L_{h}^{d-s-1}$ are independent in $K[x_{0},...,x_{n}]_{d-s-1}$. Then the decomposition lifts to a decomposition of the polynomial $F$.
\end{Proposition}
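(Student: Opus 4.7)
The plan is to prove Proposition \ref{PEul_gen} by induction on the order $s$ of differentiation, bootstrapping from Lemma \ref{PEul}, which is the case $s=1$. The key observation is that first partial derivatives of $F$ are polynomials of degree $d-1$ whose order-$(s-1)$ partials are precisely the order-$s$ partials of $F$, so the inductive hypothesis can be applied to them directly.

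For the inductive step, assume the proposition holds at order $s-1$ with $s\geq 2$, and consider the polynomials $G_j:=\partial F/\partial x_j$ for $j=0,\dots,n$, each homogeneous of degree $d-1$. Their partial derivatives of order $s-1$ coincide with the order-$s$ partials of $F$, and hence admit the given simultaneous decomposition in powers $L_i^{(d-1)-(s-1)}=L_i^{d-s}$. The inductive hypothesis applied to $G_j$ (with degree $d-1$ and order $s-1$) requires the independence of $L_i^{(d-1)-(s-1)-1}=L_i^{d-s-1}$, which is exactly our hypothesis. Thus each $G_j$ lifts to a decomposition $G_j=\sum_{i=1}^h c_i^j L_i^{d-1}$. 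With first partials of $F$ now decomposed in powers $L_i^{d-1}$, Lemma \ref{PEul} applied to $F$ itself (i.e.\ the $s=1$ case) produces the desired lift $F=\sum_{i=1}^h \mu_i L_i^d$.

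The single delicate bookkeeping point, and the main obstacle I anticipate, is verifying that the independence hypothesis propagates correctly when we finally invoke Lemma \ref{PEul}, which asks for $L_1^{d-2},\dots,L_h^{d-2}$ independent. Since $d-s-1\leq d-2$ for $s\geq 1$, this reduces to a general claim: independence of $L_1^{k},\dots,L_h^{k}$ implies independence of $L_1^{k+1},\dots,L_h^{k+1}$. This is short: if $\sum_i c_i L_i^{k+1}=0$, writing $L_i=\sum_j\alpha_j^i x_j$ and differentiating with respect to each $x_j$ yields $\sum_i c_i \alpha_j^i L_i^{k}=0$; independence at degree $k$ forces $c_i\alpha_j^i=0$ for every $i,j$, so $c_i L_i=0$ and hence $c_i=0$ since $L_i\neq 0$. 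Iterating raises the exponent from $d-s-1$ up to $d-2$, and the induction closes.
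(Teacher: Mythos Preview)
Your proof is correct and follows essentially the same approach as the paper, whose proof reads in its entirety: ``It is enough to apply Lemma~\ref{PEul} recursively.'' You have simply spelled out the recursion carefully, and in particular you supply the independence-propagation argument (from $L_i^{d-s-1}$ up to $L_i^{d-2}$) that the paper leaves implicit.
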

\begin{proof}
It is enough to apply Lemma \ref{PEul} recursively. 
\end{proof}

\begin{Alg}\label{alg_der}
\textbf{Input:} A polynomial $F\in K[x_0,\dots,x_n]_d$. 
\begin{itemize}
\item[-] Compute the spaces $H_{\partial F}^s$. If for all $s\geq 1$ we have that
\begin{itemize}
\item[-] either there does not exist a subset of point of $H_{\partial F}^s\cap \mathcal{V}_{d-s}^n$ generating $H_{\partial F}^s$ or;
\item[-] such a subset $\{L_1^{d-s},\dots,L_{h}^{d-s}\}\subset H_{\partial F}^s\cap \mathcal{V}_{d-s}^n$ exists but the powers $L_1^{d-s-1},\dots,L_{h}^{d-s-1}\in K[x_{0},...,x_{n}]_{d-s-1}$ are linearly dependent; 
\end{itemize}
the algorithm fails.
\item[-] Otherwise, take an integer $s$ such that there exists $\{L_1^{d-s},\dots,L_{h}^{d-s}\}\subset H_{\partial F}^s\cap \mathcal{V}_{d-s}^n$ with $L_1^{d-s-1},\dots,L_{h}^{d-s-1}\in K[x_{0},...,x_{n}]_{d-s-1}$ linearly independent.
\item[-] Solve the linear system $F = \sum_{i=1}^h \lambda_iL_i^d$ in the unknowns $\lambda_i$. 
\end{itemize}
\end{Alg}

\begin{Remark}
The advantage of Algorithm \ref{alg_der} is that it does no require to know in advance that $F$ admits a decomposition in $h$ powers. 
\end{Remark}

\section{Mixed tensors}\label{Mix}
In this section we extend the methods developed in Section \ref{GCM} to the non symmetric case.

\begin{Lemma}\label{falt_s_l}
Let $T\in \Sym^{d_1}V_1\otimes\dots\otimes \Sym^{d_p}V_p$ be a mixed tensor admitting a decomposition of the following form
$$T = L_{1,1}^{d_1}\otimes\dots\otimes L_{1,p}^{d_p}+\dots +L_{h,1}^{d_1}\otimes\dots\otimes L_{h,p}^{d_p}$$
where $L_{j,i}\in V_i$ for $i = 1,\dots,p$, $j = 1,\dots,h$. Consider a flattening 
$\widetilde{T}:V_A^{*}\rightarrow V_B$, where $V_A = \Sym^{a_1}V_{1}\otimes\dots\otimes \Sym^{a_p}V_{p}$ and $V_B = \Sym^{b_1}V_{1}\otimes\dots\otimes \Sym^{b_p}V_{b}$. Let $\{e_0^i,\dots,e_{n_{i}}^i\}$ be a basis of $V_{i}$, and $\xi = (\xi_0,\dots,\xi_{M_{A}})\in\mathbb{P}^{M_A}$ with $M_A = \prod_{i = 1}^p\binom{a_i+n_i}{n_i}-1$. Then the linear combination 
$$\xi_0\widetilde{T}((e_0^1)^{a_1}\dots (e_0^p)^{a_p})+\dots +\xi_{M_A}\widetilde{T}((e_{n_1}^1)^{a_1}\dots (e_{n_p}^p)^{a_p})$$ 
is a scalar multiple of  
$$
(L_{1,1}^{a_1}\otimes\dots \otimes L_{1,p}^{a_p})(\xi)(L_{1,1}^{b_1}\otimes \dots \otimes L_{1,p}^{b_p}) +\dots +(L_{h,1}^{a_1}\otimes\dots \otimes L_{h,p}^{a_p})(\xi)(L_{h,1}^{b_1}\otimes \dots \otimes L_{h,p}^{b_p}) 
$$
\end{Lemma}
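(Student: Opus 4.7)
The strategy is direct: since the flattening map $\widetilde{T}\colon V_A^*\to V_B$ is linear in $T$, it suffices to verify the claim on each rank-one summand $T_j := L_{j,1}^{d_1}\otimes\dots\otimes L_{j,p}^{d_p}$ and then add up the contributions from $j = 1,\dots,h$.

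For a single rank-one summand, I would use the natural inclusion $\Sym^{d_1}V_1\otimes\dots\otimes\Sym^{d_p}V_p\hookrightarrow V_A\otimes V_B$ to identify $T_j$ with a scalar multiple of the pure tensor
$$(L_{j,1}^{a_1}\otimes\dots\otimes L_{j,p}^{a_p})\otimes (L_{j,1}^{b_1}\otimes\dots\otimes L_{j,p}^{b_p})\in V_A\otimes V_B,$$
the scalar being a product of multinomial coefficients coming from the comultiplications $\Sym^{d_i}V_i\to\Sym^{a_i}V_i\otimes\Sym^{b_i}V_i$ applied to the powers $L_{j,i}^{d_i}$. Viewing the resulting element of $V_A\otimes V_B$ as a linear map $V_A^*\to V_B$, evaluation of $\widetilde{T_j}$ at any $\phi\in V_A^*$ therefore returns a scalar multiple of $\phi(L_{j,1}^{a_1}\otimes\dots\otimes L_{j,p}^{a_p})\cdot (L_{j,1}^{b_1}\otimes\dots\otimes L_{j,p}^{b_p})$.

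Next I would assemble the required linear combination of values of $\widetilde{T}$ at the basis monomials of the form $(e_{k_1}^1)^{a_1}\cdots(e_{k_p}^p)^{a_p}$ weighted by the coordinates $\xi_0,\dots,\xi_{M_A}$, viewing these coordinates as the components of a vector in $V_A^*$ in the dual basis. Interchanging the two summations (first over the monomial basis of $V_A$, then over $j$), the inner sum is precisely the standard pairing of $L_{j,1}^{a_1}\otimes\dots\otimes L_{j,p}^{a_p}$, regarded as a multihomogeneous linear form on $V_A^*$, with the point $\xi\in\mathbb{P}(V_A^*)$, which yields the evaluation $(L_{j,1}^{a_1}\otimes\dots\otimes L_{j,p}^{a_p})(\xi)$ in the notation of the statement. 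Summing over $j$ gives the required identity.

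The only real difficulty is the combinatorial bookkeeping of the multinomial constants coming from the comultiplications on each $\Sym^{d_i}V_i$ factor; since these constants are uniform in $j$ and independent of $\xi$, they collect into a single nonzero overall scalar, which is why the statement only asserts equality up to a scalar multiple. This is the direct mixed-tensor analog of Lemma \ref{dersp}, recovered as the special case $p=1$ with $a_1=s$ and $b_1=d-s$, where the scalar in question was the explicit factor $d!/(d-s)!$.
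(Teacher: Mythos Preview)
Your argument is correct and is essentially the same approach as the paper's, just unpacked in more detail: the paper's proof is a one-liner observing that the flattening matrix is built out of catalecticant blocks with respect to the symmetric factors and then invoking Lemma~\ref{dersp}, whereas you carry out the underlying computation directly via the comultiplications $\Sym^{d_i}V_i\to\Sym^{a_i}V_i\otimes\Sym^{b_i}V_i$ on each rank-one summand. Both routes amount to the same calculation, and your remark that the uniform multinomial constants collect into a single global scalar is exactly why the paper only claims equality up to a scalar.
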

\begin{proof}
Since the matrix representing the flattening $\widetilde{T}$, in the given bases, is made of blocks which are catalecticant matrices with respect to the symmetric parts of $T$ the claim follows from Lemma \ref{dersp}.
\end{proof}

\begin{thm}\label{gen_S}
Let $T\in \Sym^{d_1}V_1\otimes\dots\otimes \Sym^{d_p}V_p$ be a mixed tensor admitting a decomposition of the form
$$T = L_{1,1}^{d_1}\otimes\dots\otimes L_{1,p}^{d_p}+\dots +L_{h,1}^{d_1}\otimes\dots\otimes L_{h,p}^{d_p}$$
where $L_{j,i}\in V_i$ for $i = 1,\dots,p$, $j = 1,\dots,h$. If there exists a flattening 
$\widetilde{T}:V_A^{*}\rightarrow V_B$, where $V_A = \Sym^{a_1}V_{1}\otimes\dots\otimes \Sym^{a_p}V_{p}$ and $V_B = \Sym^{b_1}V_{1}\otimes\dots\otimes \Sym^{b_p}V_{b}$, such that 
\begin{itemize}
\item[(i)] $\mathbb{P}(\widetilde{T}(V_A^{*}))\cap\Sec_{h-\overline{A}}(\mathcal{SV}^{\underline{n}}_{\underline{b}})$ consists of $\binom{h}{\overline{A}}$ distinct points, and;
\item[(ii)] $\mathbb{P}(\widetilde{T}(V_A^{*}))\cap\Sec_{h-\overline{A}-1}(\mathcal{SV}^{\underline{n}}_{\underline{b}})$ consists of less than $\binom{h-1}{\overline{A}}$ distinct points,
\end{itemize} 
where $\overline{A} = \prod_{i = 1}^p\binom{a_i+n_i}{n_i}-1$ then $T$ has rank $h$ and it is $h$-identifiable. Furthermore, for $h\leq \overline{A}+2$ the identifiability criterion is effective. 
\end{thm}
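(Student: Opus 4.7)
The plan is to mimic the proof of Theorem \ref{gen2} almost verbatim, with Lemma \ref{falt_s_l} playing the role that Lemma \ref{dersp} played in the symmetric case. Concretely, I will view the flattening $\widetilde{T}:V_A^*\rightarrow V_B$ as producing, for each $\xi\in\mathbb{P}^{M_A}$, a point of $\mathbb{P}(\widetilde{T}(V_A^*))$ which, by Lemma \ref{falt_s_l}, is a scalar multiple of
$$\sum_{j=1}^{h}(L_{j,1}^{a_1}\otimes\dots\otimes L_{j,p}^{a_p})(\xi)\,(L_{j,1}^{b_1}\otimes\dots\otimes L_{j,p}^{b_p}).$$
The tensors $L_{j,1}^{b_1}\otimes\dots\otimes L_{j,p}^{b_p}$ lie on $\mathcal{SV}^{\underline{n}}_{\underline{b}}$, so whenever $\overline{A}$ of the mixed forms $L_{j,1}^{a_1}\otimes\dots\otimes L_{j,p}^{a_p}$ vanish at $\xi$, the resulting point of $\mathbb{P}(\widetilde{T}(V_A^*))$ belongs to $\Sec_{h-\overline{A}}(\mathcal{SV}^{\underline{n}}_{\underline{b}})$. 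Since the rank-one symmetric forms $L_{j,1}^{a_1}\otimes\dots\otimes L_{j,p}^{a_p}$ cut out hyperplanes on $\mathbb{P}^{M_A}$, and a general collection of $\overline{A}$ such hyperplanes meets in a single point, this procedure manufactures exactly $\binom{h}{\overline{A}}$ points of the intersection $\mathbb{P}(\widetilde{T}(V_A^*))\cap\Sec_{h-\overline{A}}(\mathcal{SV}^{\underline{n}}_{\underline{b}})$.

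From this combinatorial pigeonhole I then derive both the rank statement and the identifiability statement. Indeed, if $T$ admitted a decomposition of length $h-1$, the same construction would produce $\binom{h-1}{\overline{A}}$ points in $\mathbb{P}(\widetilde{T}(V_A^*))\cap\Sec_{h-\overline{A}-1}(\mathcal{SV}^{\underline{n}}_{\underline{b}})$, contradicting hypothesis (ii); hence the rank is exactly $h$. Similarly, two essentially distinct decompositions $T=\sum L_{j}^{\otimes\underline{d}}=\sum \ell_{j}^{\otimes\underline{d}}$ would yield strictly more than $\binom{h}{\overline{A}}$ points of $\mathbb{P}(\widetilde{T}(V_A^*))\cap\Sec_{h-\overline{A}}(\mathcal{SV}^{\underline{n}}_{\underline{b}})$, contradicting (i). Therefore the decomposition is unique up to reordering and scalars, i.e.\ $T$ is $h$-identifiable.

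For the effectiveness assertion, I would transcribe the Trisecant-lemma argument from Theorem \ref{gen2}, with $\mathcal{V}_{d-s}^{n}$ replaced by $\mathcal{SV}^{\underline{n}}_{\underline{b}}$. It suffices to treat $h=\overline{A}+2$. Let $T$ be general of rank $h$, set $H_L=\langle L_{1,1}^{b_1}\otimes\dots\otimes L_{1,p}^{b_p},\dots,L_{h,1}^{b_1}\otimes\dots\otimes L_{h,p}^{b_p}\rangle$, and suppose $\mathbb{P}(\widetilde{T}(V_A^*))$ meets $\Sec_{h-\overline{A}}(\mathcal{SV}^{\underline{n}}_{\underline{b}})$ in an additional rank-$(h-\overline{A})$ point $G=\sum_{j=1}^{h-\overline{A}}\alpha_j\,\ell_{j,1}^{b_1}\otimes\dots\otimes \ell_{j,p}^{b_p}$. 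Then $\mathbb{P}(\widetilde{T}(V_A^*))$ has dimension $\overline{A}=h-2$, so adding $G$ gives a space of dimension at most $h-1$; writing the generators of $\mathbb{P}(\widetilde{T}(V_A^*))$ as linear combinations of the $L_{j}^{\otimes\underline{b}}$, and using the extra relation produced by $G$, I can solve for the $L_j^{\otimes\underline{b}}$ as linear combinations of the generators of $\mathbb{P}(\widetilde{T}(V_A^*))$ and of the $\ell_j^{\otimes\underline{b}}$. This forces the $h$ general points $L_j^{\otimes\underline{b}}\in\mathcal{SV}^{\underline{n}}_{\underline{b}}$ to span a linear space that is also spanned by $h$ other points of $\mathcal{SV}^{\underline{n}}_{\underline{b}}$, violating the Trisecant lemma \cite[Proposition 2.6]{CC02}.

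The main obstacle in writing this out carefully is the last step: in the mixed setting one must verify that the Trisecant lemma applies to $\mathcal{SV}^{\underline{n}}_{\underline{b}}$, which is standard for non-degenerate irreducible varieties, and that the linear-algebra bookkeeping (counting relations among $L_{j}^{\otimes\underline{b}}$ and $\ell_j^{\otimes\underline{b}}$) goes through when the flattening is not square. Once this is checked, the rest of the argument is a direct transcription of the symmetric case via Lemma \ref{falt_s_l}.
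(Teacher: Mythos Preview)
Your proposal is correct and follows essentially the same approach as the paper: use Lemma \ref{falt_s_l} in place of Lemma \ref{dersp} to produce the $\binom{h}{\overline{A}}$ intersection points, then derive rank and identifiability from (ii) and (i) exactly as in Theorem \ref{gen2}. The paper's proof of effectiveness is even terser than yours---it simply says ``argue as in the last part of the proof of Theorem \ref{gen2}''---so your spelled-out Trisecant-lemma argument (and your remark that one should check its applicability to $\mathcal{SV}^{\underline{n}}_{\underline{b}}$) is, if anything, more careful than what the paper records.
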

\begin{proof} 
By Lemma \ref{falt_s_l}, if $\overline{A}$ of the forms $L_{i,1}^{a_1}\otimes\dots \otimes L_{i,p}^{a_p}$ vanish at a point $\xi\in\mathbb{P}^{\overline{A}}$ then 
$$\xi_0\widetilde{T}((e_0^1)^{a_1}\dots (e_0^p)^{a_p})+\dots +\xi_{M_A}\widetilde{T}((e_{n_1}^1)^{a_1}\dots (e_{n_p}^p)^{a_p})\in\Sec_{h-\overline{A}}(\mathcal{SV}^{\underline{n}}_{\underline{b}})$$ 

This determines $\binom{h}{\overline{A}}$ points in the intersection $\mathbb{P}(\widetilde{T}(V_A^{*}))\cap\Sec_{h-\overline{A}}(\mathcal{SV}^{\underline{n}}_{\underline{b}})$.

Now, if $T$ is a linear combination of $h-1$ elementary tensors the same argument will determine $\binom{h-1}{\overline{A}}$ points in $\mathbb{P}(\widetilde{T}(V_A^{*}))\cap\Sec_{h-\overline{A}-1}(\mathcal{SV}^{\underline{n}}_{\underline{b}})$, and this contradicts (ii).

Furthermore, if $T$ admits two different decompositions the argument above shows that we would have more than $\binom{h}{\overline{A}}$ points in $\mathbb{P}(\widetilde{T}(V_A^{*}))\cap\Sec_{h-\overline{A}}(\mathcal{SV}^{\underline{n}}_{\underline{b}})$, and this would contradict (i).

Finally, for the claim on the effectiveness for $h\leq \overline{A}+2$ it is enough to argue as in last part of the proof of Theorem \ref{gen2}.
\end{proof}

\begin{Alg}\label{alg_S}
\textbf{Input:} $T\in \Sym^{d_1}V_1\otimes\dots\otimes \Sym^{d_p}V_p$ admitting a decomposition in $h$ elementary tensors.
\begin{itemize}
\item[-] If the hypotheses of Theorem \ref{gen_S} are not satisfied for all $(A,B)$-flattenings then the method fails. 
\item[-] Otherwise the points 
$$\xi_1,\dots,\xi_{\binom{h}{\overline{A}}}\in \mathbb{P}(\widetilde{T}(V_A^{*}))\cap\Sec_{h-\overline{A}}(\mathcal{SV}^{\underline{n}}_{\underline{b}})$$
are the points where $\overline{A}$ of the $h$ forms $L_{i,1}^{a_1}\otimes\dots \otimes L_{i,p}^{a_p}$ vanish. Note that on each hyperplane $H_i = \{L_{i,1}^{a_1}\otimes\dots \otimes L_{i,p}^{a_p} = 0\}$ there are $\binom{h-1}{\overline{A}-1}$ of the $\xi_i$. 
\item[-] Among all the sets of $\binom{h-1}{\overline{A}-1}$ of the $\xi_i$ compute those spanning a hyperplane. 
\item[-] By Lemma \ref{comb} these sets are exactly $h$ and the $h$ hyperplanes spanned by them are the zero loci of the forms $L_{i,1}^{a_1}\otimes\dots \otimes L_{i,p}^{a_p}$. 
\item[-] Solve the linear system $T = \sum_{i=1}^h\lambda_i(L_{i,1}^{a_1}\otimes\dots \otimes L_{i,p}^{a_p}w)$ in the unknowns $\lambda_i\in K$.
\end{itemize}
\end{Alg} 

\begin{Remark}
Since the expected dimension of $\Sec_{h-\overline{A}}(\mathcal{SV}^{\underline{n}}_{\underline{b}})$ is $\sum_{i=1}^p n_{b_i}(h-\overline{A})+h-\overline{A}-1$ in order to apply Theorem \ref{gen_S} in the Segre-Veronese case we must have $\sum_{i=1}^p n_{i}(h-\overline{A})+h-\overline{A}-1+\prod_{i=1}^p\binom{a_i+n_i}{n_i} < \prod_{i=1}^p\binom{b_i+n_i}{n_i}$ that is 
$$h < \frac{\prod_{i=1}^p\binom{b_i+n_i}{n_i}+(\prod_{i=1}^p\binom{a_i+n_i}{n_i}-1)\sum_{i=1}^p n_{i}}{\sum_{i=1}^p n_{i}+1}.$$
Similarly, in the Segre case since the expected dimension of $\Sec_{h-\overline{A}}(\mathcal{S}^{\underline{n}}_{\underline{b}})$ is $\sum_{i=1}^{p-s}n_{b_i}(h-\overline{A})+h-\overline{A}-1$ we get
$$h < \frac{\prod_{i=p-s+1}^{p}(n_i+1)+(\prod_{i=1}^{p-s}(n_i+1)-1)\sum_{i=p-s+1}^{p}n_i}{\sum_{i=p-s+1}^{p} n_i+1}.$$
Theorem \ref{gen_S} is particularly useful for Segre products of three factors. In this case the actual codimension of $\Sec_{h-\overline{A}}(\mathcal{S}^{\underline{n}}_{\underline{b}})$ is $(n_2-h+\overline{A}+1)(n_3-h+\overline{A}+1)$. Hence, in order the apply Theorem \ref{gen_S} we need to have 
\stepcounter{thm}
\begin{equation}\label{eq_S}
h < \frac{2n_1+n_2+n_3+2-\sqrt{n_2^2-2n_2n_3+n_3^2+4n_1}}{2}.
\end{equation}
As in Remark \ref{rem_cub} when the right hand side of (\ref{eq_S}) is equal to the degree of $\Sec_{h-n_1}(\mathcal{S}^{\underline{n}}_{\underline{b}})$ the equality in (\ref{eq_S}) is allowed. In this case the classical flattenings method in Proposition \ref{prop2gen} works under the bound $h \leq n_1+1$. For instance, when $n_1 = n_2 = n_3 = n$ the bound in (\ref{eq_S}) becomes $h < 2n-\sqrt{n}+1$ while classical flattenings work for $h\leq n+1$.  
\end{Remark}

\section{Magma scripts}\label{mag_s}
A Magma library which implements our algorithms can be downloaded at the following link:
\begin{center}
\url{https://github.com/alaface/tensors-algorithm}
\end{center} 
In the following we explain the main functions in the library.  The function {\tt PolynomialOfRank} generates a random polynomial of a given rank and the functions {\tt Hilbert}, {\tt Sextic} and {\tt Septic} compute a decomposition of a plane curve of degree five, six and seven respectively in seven, nine and twelve powers. Here as some examples on a finite field and on the field of rational numbers.
\begin{tcolorbox}[
    enhanced, 
    breakable]
{\footnotesize
\begin{verbatim}
> load "library.m";
> F,lis,coef := PolynomialOfRank(2,5,7,Rationals());
> time S,Scoef := Hilbert(F);
Time: 3.490
> S;
[
    -2*x[1] - 9/4*x[2] + x[3],
    -3/50*x[1] + x[2] + x[3],
    -2/7*x[1] + x[2] + x[3],
    1/2*x[1] - 9/40*x[2] + x[3],
    27/14*x[1] + 12/7*x[2] + x[3],
    40/27*x[1] - 1/9*x[2] + x[3],
    20/9*x[1] + 5/2*x[2] + x[3]
]
> G := &+[S[j]^5*Scoef[j]:j in [1..#Scoef]];
> F eq G;
true
> time S,Scoef := Sextic(PolynomialOfRank(2,6,9,Rationals()));
Time: 19.300
> time S,Scoef := Septic(PolynomialOfRank(2,7,12,GF(32003)));
Time: 1.750
> P2<[x]> := ProjectiveSpace(RationalField(),2);
> P := x[1]^7+x[2]^7+x[3]^7+(x[1]+x[2]+x[3])^7+(x[1]+2*x[2]+3*x[3])^7 +(x[1]+7*x[2]+5*x[3])^7
   +(x[1]+(1/2)*x[2]+(1/3)*x[3])^7+(x[1]+(1/5)*x[2]+(2/3)*x[3])^7+(x[1]+(1/7)*x[2]+(1/4)*x[3])^7
   +(x[1]+8*x[2]+x[3])^7+(x[1]+(1/11)*x[2]+5*x[3])^7+(x[1]+(3/2)*x[2]+(5/7)*x[3])^7;
> time Septic(P);
[
    x[3],
    x[2],
    x[1],
    x[1] + x[2] + x[3],
    1/3*x[1] + 2/3*x[2] + x[3],
    1/5*x[1] + 7/5*x[2] + x[3],
    x[1] + 8*x[2] + x[3],
    3*x[1] + 3/2*x[2] + x[3],
    1/5*x[1] + 1/55*x[2] + x[3],
    7/5*x[1] + 21/10*x[2] + x[3],
    3/2*x[1] + 3/10*x[2] + x[3],
    4*x[1] + 4/7*x[2] + x[3]
]
[ 1, 1, 1, 1, 2187, 78125, 1, 1/2187, 78125, 78125/823543, 128/2187, 1/16384 ]
Time: 52937.280


\end{verbatim}
}
\end{tcolorbox}
In general all these functions work faster on a finite field than on the field of rational numbers. This difference is particularly appreciable for the function {\tt Septic}. 

The function {\tt TensorOfRank} generates a random mixed tensor. The functions {\tt IsIdentifiable} and {\tt IdentifyForms} are based on Algorithm \ref{alg_S}. The first determines if a tensor is identifiable, while the second actually computes the linear forms in the decomposition of an identifiable tensor. In the following example we consider a homogeneous polynomial of degree three in four variables of rank five. 
\begin{tcolorbox}[
    enhanced, 
    breakable]
{\footnotesize
\begin{verbatim}
> load "library.m";
> Q := Rationals();
>  T,lis,coef,f := TensorOfRank([3],[3],5,Q);
> time IsIdentifiable([3],[3],[1],T,5,Q);
true
Time: 0.110
> time IdentifyTensor([3],[3],T,5,Q);    
[
    [
        x[1] - 21/10*x[2] + 2*x[3] + 2*x[4]
    ],
    [
        x[1] + 7/10*x[2] + 1/10*x[3] + 21/20*x[4]
    ],
    [
        x[1] - 21/2*x[2] - 3/7*x[3] - 27*x[4]
    ],
    [
        x[1] + 5/6*x[2] + 5/21*x[3] + 8/9*x[4]
    ],
    [
        x[1] + x[2] - 3/2*x[3] - 7/9*x[4]
    ]
]
[ -25/18, 1000/343, 1/36, -27/2, 1 ]
Time: 1.254
\end{verbatim}
}
\end{tcolorbox}
Next, we consider a tensor of rank five in $K^{4}\otimes K^{4}\otimes K^{4}$.
\begin{tcolorbox}[
    enhanced, 
    breakable]
{\footnotesize
\begin{verbatim}
> load "library.m";
> Q := Rationals();
> T,lis,coef,f := TensorOfRank([3,3,3],[1,1,1],5,Q);
> time IsIdentifiable([3,3,3],[1,1,1],[1,0,0],T,5,Q);
true
Time: 0.200
> time IdentifyTensor([3,3,3],[1,1,1],T,5,Q);
[
    [
        x[1] + 2*x[2] - 16/7*x[3] - 18/5*x[4],
        x[5] + 18*x[6] - 7/5*x[7] + 10/3*x[8],
        x[9] + 5/4*x[10] + 3*x[11] - 35/4*x[12]
    ],
    [
        x[1] - 12/5*x[2] - 1/15*x[3] - 3/8*x[4],
        x[5] + 12/5*x[6] + x[7] + 24*x[8],
        x[9] + 24/35*x[10] - 27/20*x[11] + 3/40*x[12]
    ],
    [
        x[1] - 2/3*x[2] - 2/7*x[3] + 1/30*x[4],
        x[5] - 5*x[6] + 9*x[7] + 5/3*x[8],
        x[9] - 14/27*x[10] - 7/9*x[11] + 7/9*x[12]
    ],
    [
        x[1] - 3/4*x[2] - 3/4*x[3] - 9/8*x[4],
        x[5] + 6/7*x[6] + 5/9*x[7] - 16/9*x[8],
        x[9] - 9*x[10] - 1/2*x[11] - 4/5*x[12]
    ],
    [
        x[1] + 15/8*x[2] - 35/36*x[3] + 1/12*x[4],
        x[5] + 5/6*x[6] - 21/2*x[7] + 12*x[8],
        x[9] + 20/21*x[10] + 1/15*x[11] + 2/9*x[12]
    ]
]
[ -1/2, 25/27, -27/70, -10/3, 6/5 ]
Time: 1.507
\end{verbatim}
}
\end{tcolorbox}
Finally, we give an example for a tensor of rank six in $K^{5}\otimes \Sym^2K^{5}$.
\begin{tcolorbox}[
    enhanced, 
    breakable]
{\footnotesize
\begin{verbatim}
> load "library.m";
> Q := Rationals();
> T,lis,coef,f := TensorOfRank([4,4],[1,2],6,Q);     
> time IsIdentifiable([4,4],[1,2],[1,0],T,6,Q);      
true
Time: 0.360
> time IdentifyTensor([4,4],[1,2],T,6,Q);    
[
    [
        x[1] - 45/8*x[2] - 10*x[3] + 5*x[4] - 45/2*x[5],
        x[6] + 5/9*x[7] - 25/9*x[8] - 10/9*x[9] + 25/63*x[10]
    ],
    [
        x[1] - 10/9*x[2] - 10/3*x[3] - 10/3*x[4] - 6*x[5],
        x[6] + 10/7*x[7] + 50/63*x[8] - 40/9*x[9] + 5/9*x[10]
    ],
    [
        x[1] - 1/2*x[2] + 1/5*x[3] - 8/45*x[4] - 3/5*x[5],
        x[6] + 2/5*x[7] + 9/40*x[8] + 1/25*x[9] - 8/5*x[10]
    ],
    [
        x[1] + 1/2*x[2] - 2/5*x[3] - 1/3*x[4] + 7/4*x[5],
        x[6] + 3/4*x[7] + 9/8*x[8] - 3/4*x[9] - 3/5*x[10]
    ],
    [
        x[1] + 9/20*x[2] + 27/70*x[3] - 81/80*x[4] + 3/5*x[5],
        x[6] + 10/27*x[7] + 2/9*x[8] - 2/15*x[9] + 7/3*x[10]
    ],
    [
        x[1] - 8/5*x[2] - 36/35*x[3] - 8/7*x[4] - 2/15*x[5],
        x[6] + 2/5*x[7] + 16/35*x[8] + 4/15*x[9] - 8/25*x[10]
    ]
]
[ 54/125, 243/500, 125/2, 2, -5/2, -625/144 ]
Time: 2.073
\end{verbatim}
}
\end{tcolorbox}

\bibliographystyle{amsalpha}
\bibliography{Biblio}

\end{document}